\newcommand{\R}{\mathbb R}
\newcommand{\N}{\mathbb N}
\newcommand{\Z}{\mathbb Z}
\newcommand{\C}{\mathbb C}
\newcommand{\set}[1]{\left\{#1\right\}}
\newcommand{\abs}[1]{\left|#1\right|}
\newcommand{\norm}[1]{\left\lVert#1\right\rVert}
\DeclareMathOperator{\id}{id}
\DeclareMathOperator{\dist}{dist}
\DeclareMathOperator{\supp}{supp}
\DeclareMathOperator{\re}{Re}
\newtheorem{theorem}{Theorem}
\newtheorem*{main}{Main Theorem}
\newtheorem{definition}{Definition}
\newtheorem{proposition}{Proposition}
\newtheorem{lemma}{Lemma}
\newtheorem*{corollary}{Corollary}
\begin{document}

\title{Carleman approximation by non-critical functions on Riemann surfaces}
\author[B. U\v{c}akar]{Beno U\v{c}akar $^{\dag}$}
\address{B. U\v{c}akar: Institute for Mathematics, Physics and Mechanics and Faculty of Mathematics and Physics, University of Ljubljana, Jadranska 19, 1000 Ljubljana, Slovenia}
\email{beno.ucakar@imfm.si}
\thanks{$^{\dag}$  Supported by the research program P1-0291 from ARIS, Republic of Slovenia}
\date{}

\begin{abstract}
    We present the class of semi-admissible subsets of an open Riemann surface on which Carleman approximation by non-critical holomorphic functions is possible.
    In particular we characterize closed sets with empty interior on which continuous functions can be approximated by non-critical holomorphic ones. 
    We also consider a different approach, which in some cases gives uniform approximation by non-critical holomorphic functions on more general sets than semi-admissible ones.
\end{abstract}

\maketitle

\section{Introduction} \label{introduction}

Carleman's approximation theorem states that for any complex-valued continuous function $f \colon \R \to \C$ 
and any continuous positive-valued function $\varepsilon \colon \R \to (0,\infty)$, there exists an entire function $F \colon \C \to \C$ such that $\abs{F(x)-f(x)} < \varepsilon(x)$ holds for every $x \in \R$.
This result was proven by T.~Carleman in \cite{OGCarleman}. 
The distinguishing feature of Carleman approximation is that $\varepsilon$ is a function, 
thus the difference between the original function $f$ and the approximating function $F$ can be controlled pointwise. 
In particular, the difference can be made arbitrarily small as we go off to infinity along the real line.
Thus this is a stronger result than just uniform approximation, where $\varepsilon$ would be a constant function.
Subsets of the complex plane which admit Carleman approximation, for the precise definition see Section \ref{ApproximationOnRS}, were first characterized by A.~A.~Nersesyan in \cite{Nersesyan1} and \cite{Nersesyan2}.
Carleman approximation was later generalized to open Riemann surfaces. Sets which admit Carleman approximation in this setting were characterized by A.~Boivin in \cite{Boivin1986}, see also Section \ref{ApproximationOnRS}.
For a general overview of the theory of holomorphic approximation we refer the reader to the survey article \cite{LegacyOfWeirstrass}.
 
Motivated by the work in \cite{franc} we consider the problem of Carleman approximation by non-critical functions on open Riemann surfaces, namely under what assumptions can one require that the global approximating function has no critical points.
In the case where the open Riemann surface is just the complex plane, this theory might have applications in complex dynamics, where it would serve as a new tool to construct entire functions without critical points with interesting dynamical properties.

This problem is not completely open. 
Using results about approximation of directed minimal immersion by I.~Castro-Infantes and B.~Chenoweth in \cite{BrettCastro}, one can achieve non-critical Carleman approximation of functions with sufficient regularity assumptions on so called \emph{admissible} sets, see Section \ref{ApproximationOnRS}. 
Here we provide a different approach which gives a positive result for functions of the class $\widetilde{\mathcal{A}}$, see Definition \ref{Atilde}, on a more general class of sets which we call \emph{semi-admissible}, see Definition \ref{semi-admissible}.
The proof is in essence similar to the original proof by Carleman, but modified to ensure that the approximating function is non-critical. 
The main result of this paper is the following.

\begin{main}
    Let $X$ be an open Riemann surface and $X^*$ the one point compactification of $X$.
    Let $E \subseteq X$ be a semi-admissible set such that $X^* \setminus E$ is connected and locally connected. 
    Let $f \in \widetilde{\mathcal{A}}(E)$ be non-critical and let $\varepsilon \colon E \to (0, \infty)$ be a continuous positive-valued function.
    Then there exists a global non-critical holomorphic function $F \in \mathcal{O}(X)$ such that we have $\abs{F(p) - f(p)} < \varepsilon(p)$ for each $p \in E$. 
\end{main}

As an immediate consequence of the main theorem, we get the following corollary.

\begin{corollary}
    Let $E \subseteq X$ be a closed set with empty interior, such that $X^*\setminus E$ is connected and locally connected.
    Let $f \in \mathcal{C}(E)$ be a complex-valued continuous function on $E$ and let $\varepsilon \colon E \to (0, \infty)$ be a continuous positive-valued function.
    Then there exists a global non-critical holomorphic function $F \in \mathcal{O}(X)$ such that we have $\abs{F(p) - f(p)} < \varepsilon(p)$ for each $p \in E$. 
\end{corollary}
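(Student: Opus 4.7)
The plan is to deduce the corollary directly from the Main Theorem by verifying that each of its hypotheses is automatic when $E$ has empty interior, so no independent proof is needed.

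First, I would argue that a closed set $E \subseteq X$ with $\Int E = \emptyset$ is always semi-admissible. The definition of semi-admissibility (Definition \ref{semi-admissible}) governs the interplay between the interior of $E$, where a function of class $\widetilde{\mathcal{A}}(E)$ is required to be holomorphic, and its boundary, where only continuity is demanded; when $\Int E = \emptyset$, all conditions involving interior components are vacuous, so the definition reduces to $E$ being closed, which is assumed. Analogously, $\widetilde{\mathcal{A}}(E)$ collapses to $\mathcal{C}(E)$ in this situation, since any holomorphy requirement on the empty interior is trivially met. Hence any $f \in \mathcal{C}(E)$ automatically belongs to $\widetilde{\mathcal{A}}(E)$.

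Second, the non-criticality hypothesis is vacuous. A critical point of $f \in \widetilde{\mathcal{A}}(E)$ would, by definition, have to lie in $\Int E$, where $df$ is well-defined; but $\Int E = \emptyset$, so the set of critical points of $f$ is empty for any continuous $f$ on $E$. The topological condition that $X^* \setminus E$ is connected and locally connected is shared verbatim between the corollary and the Main Theorem, and the continuous positive function $\varepsilon$ carries over unchanged.

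Combining these three observations, the Main Theorem applies to $f$ and $\varepsilon$ as stated, producing a non-critical $F \in \mathcal{O}(X)$ with $\abs{F(p) - f(p)} < \varepsilon(p)$ for every $p \in E$. The main obstacle, such as it is, lies purely in the bookkeeping step of checking against Definitions \ref{semi-admissible} and \ref{Atilde} that the empty-interior case really does degenerate as described; once this is confirmed, the corollary is a one-line consequence.
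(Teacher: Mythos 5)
Your argument is correct and is exactly what the paper has in mind: the paper labels the corollary an ``immediate consequence'' of the Main Theorem and supplies no separate proof, so the content is precisely the reduction you describe. One remark on rigor: your paraphrases of Definitions~\ref{semi-admissible} and~\ref{Atilde} are looser than the actual statements. Semi-admissibility is a structural decomposition $E = S \cup H$ with $S$ closed of empty interior and $H$ a locally finite disjoint union of compacts, not a condition about ``interior components''; and a function in $\widetilde{\mathcal{A}}(E)$ is called non-critical when it is non-critical on a neighborhood of $H$, not of $\mathring{E}$. The clean way to make your three observations precise is to take the empty index set $\Lambda = \emptyset$, so $H = \emptyset$ and $S = E$: this exhibits $E$ as semi-admissible, shows $\widetilde{\mathcal{A}}(E) = \mathcal{C}(E)$ because holomorphy on a neighborhood of $\emptyset$ is vacuous, and shows that every $f \in \mathcal{C}(E)$ is automatically non-critical for the same reason. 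With that substitution your deduction goes through verbatim.
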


Since $X^*\setminus E$ being connected and locally connected is a necessary condition even for uniform approximation, 
this characterizes all the closed sets with empty interior on which non-critical approximation is possible.

We provide a short outline of the paper.
In Section \ref{notation} we will introduce some notation. 
Section \ref{ApproximationOnRS} is a short review of some notions from approximation theory on Riemann surfaces.
In Section \ref{terminology} we will introduce the notion of a semi-admissible set and the class of functions $\widetilde{\mathcal{A}}(E)$ on a semi-admissible set $E$.
Section \ref{MainProof} is dedicated to the proof of the main theorem.
In Section \ref{Uniform} we discuss the problem of non-critical approximation on general sets of Carleman approximation and present a different
approach which yields non-critical uniform approximation on certain sets of Carleman approximation which are not semi-admissible.
 
\subsection*{Acknowledgements}
I want to thank Luka Boc Thaler, Franc Forstneri\v{c} and \'Alfhei{\dh}ur Edda Sigur{\dh}ard\'ottir for all the discussions during the writing of this paper.
I am also grateful to the anonymous reviewers for their helpful suggestions and comments.


\section{Notation} \label{notation}

Let $\N$ denote the set of natural numbers, $\N_0 = \N \cup \set{0}$ and $\Z$ the set of integers.
For a point $z \in \C$ and $r > 0$ let $B(z,r)$ be the open disc centered at $z$ with radius $r$.
We also denote the distance between a point $z \in \C$ and a subset $A \subseteq \C$ by 
\[\dist(z,A) = \inf_{w \in A} \abs{z-w}.\]

From now on $X$ will denote a connected open Riemann surface. 
We also fix a complete distance function $d_X \colon X \times X \to [0,\infty)$ on $X$ induced by a smooth Riemannian metric on $TX$.
For a subset $A \subseteq X$ and $r>0$ we then define  
\[A(r) = \set{p \in X \mid \text{$d_X(p,q) < r$ for some $q \in A$}}.\] 
Given an open set $U \subseteq X$ and a map $\gamma \colon U \to X$ we define  
\[\norm{\gamma - \id}_U = \sup_{p \in U} d_X(\gamma(p),p).\] 
For a compact set $K \subseteq X$ and two continuous functions $f,g \colon K \to \C$ we similarly define 
\[\norm{f-g}_K = \sup_{p \in K}\abs{f(p)-g(p)}.\] 

For a set $A \subseteq X$ let $\mathring{A}$, $\partial A$ and $\overline{A}$ denote the topological interior, boundary and closure of $A$ respectively.
If $V \subseteq U \subseteq X$ are open sets, we say that $V$ is \emph{compactly contained} in $U$, if $\overline{V} \subseteq U$ and $\overline{V}$ is compact. 
A \emph{compact exhaustion} of $X$ is a sequence of compact sets $\set{K_n}_{n \in \N_0}$ such that $K_n \subseteq \mathring{K}_{n+1}$ for all $n \in \N_0$ and $\bigcup_{n \in \N_0} K_n = X$. 

For a set $A \subseteq X$ let $\mathcal{C}(A)$ be the class of continuous complex-valued functions on $A$.
For an open set $U \subseteq X$ let $\mathcal{O}(U)$ be the class of holomorphic functions on $U$.
For a compact set $K \subseteq X$ let $\mathcal{O}(K)$ denote the class of holomorphic functions which are defined on some open neighbourhood of $K$.
Finally, for a closed set $E \subseteq X$ let $\mathcal{A}(E)$ denote the class of continuous complex-valued functions on $E$ which are holomorphic on the interior of $E$,
that is $\mathcal{A}(E) = \mathcal{C}(E) \cap \mathcal{O}(\mathring{E})$. In particular if the closed set $E$ has empty interior, we have $\mathcal{A}(E) = \mathcal{C}(E)$.

Let $U \subseteq X$ be an open set, $f \in \mathcal{O}(U)$ a holomorphic function and let $d_pf \colon T_pU \to \C$ be its differential at $p \in U$. 
A point $p \in U$ is called a \emph{critical point} of the function $f$, if $d_pf = 0$.
On the other hand, if $df_p \neq 0$ holds for all $p \in U$, we say that the holomorphic function $f$ is \emph{non-critical}.
In the case when $X = \C$ the function $f$ being non-critical just means that the derivative of the functions $f$ is nowhere vanishing. 
For a closed set $E$, we say that a function in $\mathcal{A}(E)$ is non-critical, if it is non-critical where it is holomorphic.

\section{Approximation theory on Riemann surfaces} \label{ApproximationOnRS}

In this section we will recall some basic notions of approximation theory on Riemann surfaces.

We begin by introducing some terminology.
We say that a closed set $E \subseteq X$ is a set of \emph{uniform approximation}, if for every $\varepsilon > 0$ and $f \in \mathcal{A}(E)$ 
there exists a global holomorphic function $F \in \mathcal{O}(X)$ such that $\abs{F(p)-f(p)} < \varepsilon$ for every $p \in E$.

Let $E \subseteq X$ be a closed set. 
A relatively compact connected component of $X \setminus E$ is called a \emph{hole} of the closed set $E$.
We will denote the union of all the holes of the closed set $E$ by $h(E)$. 

Let $K \subseteq X$ be a compact set. 
The \emph{holomorphically convex hull} or \emph{$\mathcal{O}(X)$-hull} of $K$, denoted by $\widehat{K}$, is defined as 
\[\widehat{K} = \set{p \in X \mid \abs{f(p)} \le \sup_{q \in K}\abs{f(q)}, f \in \mathcal{O}(X)}.\]
The set $\widehat{K}$ is again compact.
If $K = \widehat{K}$, we say that the compact set $K$ is \emph{holomorphically convex}, \emph{$\mathcal{O}(X)$-convex} or \emph{Runge}.
Note that $\widehat{\emptyset} = \emptyset$ because $\sup \emptyset = - \infty$.
Note also that due to the Runge approximation theorem on Riemann surfaces, one has $\widehat{K} = K \cup h(K)$.
Thus a compact set $K$ is $\mathcal{O}(X)$-convex if and only if it has no holes, which is a purely topological condition.
This equivalence only holds on Riemann surfaces, since in higher dimensions the $\mathcal{O}(X)$-hull of a compact set also depends on the complex structure.

By $X^* = X \cup \set{\infty}$ we will denote the one-point compactification of $X$.
It is easy to see that a closed subset $E \subseteq X$ has no holes if and only if $X^* \setminus E$ is connected in $X^*$.

It is sometimes more convenient to work with this characterization.
For example, if $E, E' \subseteq X$ are two closed subsets without holes, then their intersection $E \cap E'$ also has no holes. 
Indeed, by the above characterization both $X^*\setminus E$ and $X^*\setminus F$ are connected.
Since $\infty \in (X^*\setminus E) \cap (X^*\setminus E')$, the set $(X^*\setminus E) \cup (X^*\setminus E') = X^* \setminus (E \cap E')$ is also connected and thus $E \cap E'$ has no holes.

Let $E \subseteq X$ be a closed set. We say that $E$ has the \emph{bounded exhaustion hull property} or \emph{BEH property},
if for every compact set $K$, the set $h(E \cup K)$ is relatively compact. 
This definition is slightly different from the standard one given in the literature, for example in \cite[168]{MinimalSurfacesFromAnalyticPOV},
but it agrees on Riemann surfaces. This follows from the fact that a compact set is Runge if and only if it has no holes.

For a non-example take $X = \C$ and let $E = \overline{\set{x + f(x)i \mid x \in (0,\infty)}}$ be the closure of the graph of the function $f(x) = \frac{1}{x}\sin\left(\frac{1}{x}\right)$ on $(0,\infty)$ as a subset of $\C$. 
It is easy to see that $E$ does not have the BEH property.

Note that the BEH property admits the following characterization using the one-point compactification of $X$.

\begin{proposition}\label{locallyconnected}
    Let $E \subseteq X$ be a closed subset. 
    Then $E$ has the BEH property if and only if $X^*\setminus E$ is locally connected at $\infty$.
\end{proposition}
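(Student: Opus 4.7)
The plan is to translate both sides of the equivalence into statements about the connected component of $\infty$ in $X^*\setminus(E\cup K)$, as $K$ ranges over the compact subsets of $X$. First I would record two routine observations: open neighbourhoods of $\infty$ in $X^*\setminus E$ are exactly the sets $X^*\setminus(E\cup L)$ with $L\subseteq X$ compact, and every non-relatively-compact connected component of $X\setminus(E\cup K)$ accumulates at $\infty$. Consequently, the connected component of $\infty$ in $X^*\setminus(E\cup K)$ is
\[
    C_\infty(K) := \{\infty\}\cup\bigl(X\setminus(E\cup K\cup h(E\cup K))\bigr),
\]
which is connected and whose trace on $X$ is open in $X$ (being the union of components of the open set $X\setminus(E\cup K)$).

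For the forward direction I would assume BEH and show, for each compact $K\subseteq X$, that $C_\infty(K)$ is an open neighbourhood of $\infty$ in $X^*\setminus E$. Since $C_\infty(K)$ is connected and contained in $X^*\setminus(E\cup K)$, this yields local connectedness at $\infty$. Set $L := K\cup\overline{h(E\cup K)}$, compact by BEH. Any point $p\in X\setminus(E\cup L)$ lies in a component of $X\setminus(E\cup K)$ that is not a hole (since $h(E\cup K)\subseteq L$), hence non-relatively-compact, so $p\in C_\infty(K)$. Thus $X^*\setminus(E\cup L)\subseteq C_\infty(K)$ is an open neighbourhood of $\infty$ in $X^*\setminus E$ lying inside $C_\infty(K)$; combined with the openness of $C_\infty(K)\cap X$ in $X$, this gives that $C_\infty(K)$ is open in $X^*\setminus E$.

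For the converse I would assume local connectedness at $\infty$. Given a compact $K$, pick a connected open neighbourhood $W$ of $\infty$ in $X^*\setminus E$ with $W\subseteq X^*\setminus(E\cup K)$, together with a compact $L\supseteq K$ satisfying $X^*\setminus(E\cup L)\subseteq W$; the goal is to show $h(E\cup K)\subseteq L$, from which BEH follows at once. Suppose for contradiction that some hole $U$ of $E\cup K$ meets $X\setminus L\subseteq W$. Then $U\cap W$ is non-empty and open in $W$ (since $U$ is open in $X$), and also closed in $W\cap X$ (because $\partial U\subseteq E\cup K$ is disjoint from $W$). The main obstacle is to rule out $\infty$ as a limit point of $U\cap W$ in $W$, and this is precisely where relative compactness of the hole $U$ is essential: it implies that $\overline{U}^{X^*}=\overline{U}^{X}$ does not contain $\infty$. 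Hence $U\cap W$ is clopen in the connected set $W$, forcing $U\cap W=W$ and contradicting $\infty\in W\setminus U$.
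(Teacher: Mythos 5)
Your proof is correct and follows the same overall plan as the paper --- translating both conditions into statements about compacts $K$, and in the forward direction using the same key compact $L = K\cup\overline{h(E\cup K)}$ --- but the two directions are packaged a bit differently, and the backward direction in particular is argued by a genuinely different (and cleaner) mechanism. In the forward direction, the paper shows directly that $E\cup L$ has no holes and hence $X^*\setminus(E\cup L)$ is a connected open neighbourhood of $\infty$ inside $X^*\setminus(E\cup K)$; you instead identify the full connected component $C_\infty(K)$ of $\infty$ in $X^*\setminus(E\cup K)$ and show it is open by exhibiting $X^*\setminus(E\cup L)$ inside it. These are not the same set (yours is generally strictly larger), but both serve. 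In the backward direction the paper chases nested connected components: it shows every component of $U\setminus L$ is a component of $X\setminus(E\cup L)$, hence a hole of $E\cup L$, contradicting $h(E\cup L)=\emptyset$. You instead run a clopen argument in the connected set $W$: relative compactness of the hole $U$ keeps $\infty$ off $\overline{U\cap W}$, so $U\cap W$ is clopen in $W$, forcing $U\cap W\in\{\emptyset,W\}$, and $\infty\in W\setminus U$ rules out the second. This is more streamlined and avoids the component bookkeeping, and it also works verbatim if the connected neighbourhood $W$ is not assumed open. One small slip to fix: the inclusion ``$X\setminus L\subseteq W$'' is not literally true, since $X\setminus L$ may meet $E$; what you use (and what is correct) is that $U\cap(X\setminus L)\subseteq X\setminus(E\cup L)\subseteq W$ because $U$ is disjoint from $E$.
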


\begin{proof}
    First assume that $E$ has the BEH property. 
    Let $X^* \setminus K$ be some open neighborhood of $\infty$ in $X^*$, where $K \subseteq X$ is some compact set.
    By the BEH property, $\overline{h(E \cup K)}$ is a compact set. 
    Now set $L = K \cup \overline{h(E \cup K)}$ to get an open neighborhood $X^*\setminus L \subseteq X^*\setminus K$ of $\infty$ in $X^*$.
    We claim $X^* \setminus (E \cup L)$ is connected, which is equivalent to the set $E \cup L$ not having holes.
    So suppose $U$ is a connected component of $X \setminus (E \cup L)$.
    Notice that $U \subseteq X \setminus (E \cup L) \subseteq  X \setminus (E \cup K)$, so $U$ is contained is some connected component $V$ of $X \setminus (E \cup K)$.
    The set $V$ is not relatively compact, since if it were, we would have $U \subseteq V \subseteq h(E \cup K) \subseteq L$, which is a contradiction.
    In particular this implies $V \cap h(E \cup K) = \emptyset$, and since $V \subseteq X \setminus (E \cup K)$, we have $V \cap L = \emptyset$.
    Then
    \[V \subseteq X \setminus (E \cup K) = (X \setminus (E \cup L)) \cup (L \setminus (E \cup K))\]
    implies $U \subseteq V \subseteq X \setminus (E \cup L)$. 
    Since $U$ is a maximal connected component of $X \setminus (E \cup L)$ and $V$ is connected, we have $U = V$ and hence the set $U$ is not relatively compact.

    Now assume that $X^*\setminus E$ is locally connected at $\infty$ and let $K \subseteq X$ be any compact set. 
    Then $X^* \setminus (E \cup K)$ is an open neighborhood of $\infty$ in $X^*\setminus E$. 
    Since by our assumption $X^*\setminus E$ is locally connected at $\infty$, there exists a compact set $L \subseteq X$ such that $K \subseteq L$ and $X^* \setminus (E \cup L)$ is connected, or equivalently that the set $E \cup L$ has no holes. 
    Now let $U$ be a hole of $K \cup E$ and note $U \setminus L \subseteq X \setminus (E \cup L)$.
    We claim that any connected component of $U \setminus L$ is a connected components of $X \setminus (E \cup L)$ as well.
    To see this, let $V$ be a connected component of $U \setminus L$ and let $\widetilde{V}$ be the connected component of $X \setminus (E \cup L)$ that contains $V$.
    Since $\widetilde{V}$ is a connected subset of $X \setminus (E \cup K)$, $U \cap \widetilde{V} \supseteq V \neq \emptyset$ and $U$ is a connected component of $X \setminus (E \cup K)$, we must have $\widetilde{V} \subseteq U$.  
    Furthermore, $\widetilde{V} \subseteq X \setminus (E \cup L)$ implies $\widetilde{V} \subseteq U \setminus (E \cup L) = U \setminus L$, so by the connectedness of $\widetilde{V}$, it follows $V = \widetilde{V}$, proving the claim.
    Since $U$ is relatively compact by assumption, any connected component of $U \setminus L$ would also be relatively compact,
    but since $E \cup L$ has no holes, this implies $U \setminus L = \emptyset$.
    It follows that $h(K \cup E) \subseteq L$ and since $L$ is compact, $h(K \cup E)$ is relatively compact.
\end{proof}

P.~M.~Gauthier and W.~Hengartner showed in \cite{Gauthier_Hengartner} that $X^* \setminus E$ being connected and locally connected are necessary conditions for $E$ to be a set of uniform approximation, see also \cite{NeedConnectedAndLocConnected}.
If the set $E$ is compact, it is clear that it has the BEH property. 
Thus Mergelyan's theorem, see \cite[Theorem 3]{LegacyOfWeirstrass}, says that compact sets of uniform approximation are precisely Runge sets.
For non-compact closed set the problem of uniform approximation is more subtle. 
In the case when $X = \C$, the set $\mathbb{CP}^1 \setminus E$ being connected and locally connected is also sufficient and is the content of Arakelyan approximation, see \cite[Theorem 10]{LegacyOfWeirstrass}. 
For general Riemann surfaces $X^* \setminus E$ being connected and locally connected is not a sufficient condition as was shown in \cite{Gauthier_Hengartner}, see also \cite{NeedConnectedAndLocConnected}.

We now turn our attention to the problem of Carleman approximation on Riemann surfaces.
We say that a closed set $E \subseteq X$ is a set of \emph{Carleman approximation}, if for every continuous positive-valued function $\varepsilon \colon E \to (0,\infty)$ and $f \in \mathcal{A}(E)$
there exists a global holomorphic function $F \in \mathcal{O}(X)$ such that $\abs{F(p)-f(p)} < \varepsilon(p)$ holds for every $p \in E$.

P.~Gauthier showed in \cite{Gauthier} that if a closed set $E \subseteq X$ is a set of Carleman approximation, it enjoys the following property: 
For any compact set $K \subseteq X$ there exists a compact set $K \subseteq Q \subseteq X$ such that no connected component of $\mathring{E}$ intersects both $K$ and $X \setminus Q$.
If a closed set $E$ has the above property, we say it \emph{satisfies condition $\mathcal{G}$}. Note that if the closed set $E$ has empty interior, it trivially satisfies condition $\mathcal{G}$.
Sets of Carleman approximation on Riemann surfaces were later characterized by A.~Boivin in \cite{Boivin1986}.
He showed that a closed set $E \subseteq X$ is a set of Carleman approximation if and only if $X^*\setminus E$ is connected, locally connected and $E$ satisfies condition $\mathcal{G}$.

More recently Carleman type approximation results were developed for the study of minimal surfaces, see \cite[Chapter 3.8]{MinimalSurfacesFromAnalyticPOV} and \cite{BrettCastro}.
In particular, I.~Castro-Infantes and B.~Chenoweth proved a Carleman type approximation result for directed minimal immersion on so called admissible sets.
We say that a closed set $E \subseteq X$ is \emph{admissible} if it is of the form $E = H \cup S$ where $H$ is the union of a locally finite pairwise disjoint family of smoothly bounded compact domains 
and $S$ is the union of a locally finite pairwise disjoint collection of smooth Jordan arcs, such that if a given arc in $S$ intersects the boundary of a compact domain in $H$, 
it intersects it only at its endpoints and does so transversely. 

Lastly we consider the problem of non-critical holomorphic approximation. 
The goal is to approximate a non-critical holomorphic function defined on a subset of the open Riemann surface $X$ by a global non-critical holomorphic function.  
One of the main results in this area is the following Runge approximation theorem for non-critical holomorphic functions due to F.~Forstnerič.
\begin{theorem} \label{Franci_X}
    Let $K \subseteq X$ be a compact Runge set and let $f \in \mathcal{O}(K)$ be a non-critical holomorphic function.
    Then for every $\varepsilon > 0$ there exists a global non-critical holomorphic function $F \in \mathcal{O}(X)$ such that $\norm{F-f}_K < \varepsilon$.
\end{theorem}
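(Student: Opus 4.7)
My plan is to reduce, via the Gunning--Narasimhan theorem, to a question about nowhere-vanishing exact holomorphic $1$-forms on $X$, and then to solve it by combining a multiplicative Runge approximation with a period-killing perturbation carried out along an exhaustion of $X$. By Gunning--Narasimhan every open Riemann surface admits a globally non-critical holomorphic function $h \in \mathcal{O}(X)$, so every holomorphic $1$-form on $X$ has the form $\varphi\, dh$ for a unique $\varphi \in \mathcal{O}(X)$, and $F \in \mathcal{O}(X)$ is non-critical iff $dF = \varphi\, dh$ with $\varphi$ nowhere zero. On a Runge neighbourhood $U$ of $K$ one writes $df = \psi\, dh$ with $\psi \in \mathcal{O}(U)$ nowhere zero, and the task becomes to find a nowhere-zero $\varphi \in \mathcal{O}(X)$ that approximates $\psi$ uniformly on $K$ and whose associated $1$-form $\varphi\, dh$ has vanishing periods on every loop in $X$; then $F(p) := f(p_0) + \int_{p_0}^{p} \varphi\, dh$ is the required function.

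As a first step, a nowhere-zero $\varphi_0 \in \mathcal{O}(X)$ with $\|\varphi_0 - \psi\|_K$ arbitrarily small can be produced by a multiplicative Runge argument: enlarge $K$ to a Runge set $K'$ by adjoining disjoint arcs to infinity so that $\log \psi$ admits a single-valued branch on $K'$, apply ordinary Runge to this branch, and exponentiate. In general $\varphi_0\, dh$ will have nonzero periods, so I would seek a multiplicative correction $\varphi = \varphi_0 \cdot e^{\eta}$ with $\eta \in \mathcal{O}(X)$ small on $K$, which keeps $\varphi$ nowhere zero; the vanishing-periods condition becomes a system of complex equations in $\eta$, with linearisation $\delta\eta \mapsto \int_\gamma \varphi_0\, \delta\eta\, dh$ at each loop $\gamma$.

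The main obstacle is to solve this system while keeping $\|\eta\|_K$ small, and the approach is inductive along an exhaustion $K \subseteq L_1 \subseteq L_2 \subseteq \cdots$ of $X$ by Runge compacts: at stage $n$ one only needs to annihilate the finitely many periods introduced by the new homology of $L_{n+1}$ relative to $L_n$. Surjectivity of the period linearisation at each stage is secured by constructing explicit test perturbations $\delta\eta_1, \ldots, \delta\eta_m \in \mathcal{O}(X)$ supported essentially outside $L_n$ whose period vectors span $\C^m$; this uses that $\varphi_0\, dh$ is nowhere zero and that the new cycles can be represented disjointly from $L_n$ (Runge compacts having no holes). A quantitative inverse function theorem at each stage, with errors summed geometrically, produces a global $\eta$, and $F = \int \varphi_0 e^\eta\, dh$ is the desired non-critical approximant, satisfying $\|F - f\|_K < \varepsilon$ by construction.
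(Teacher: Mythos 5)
The paper does not actually prove Theorem~\ref{Franci_X}: it is Forstnerič's result, quoted from \cite{franc} and \cite{StainManifoldsAndHolomorphicMappings} and used as a black box, so there is no in-paper proof to compare against. Your overall strategy --- reduce via Gunning--Narasimhan to approximating the nowhere-vanishing factor $\psi$ in $df=\psi\,dh$ by a global nowhere-vanishing function, then kill periods along an exhaustion --- is the right architecture and is consistent with the structure of Forstnerič's argument; in the model case $X=\C$ it degenerates to the integration proof the paper alludes to at the end of Section~\ref{Uniform} (write $f'=e^g$, Runge-approximate $g$, integrate).

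There is, however, a genuine gap in your multiplicative Runge step. You claim that adjoining disjoint arcs to infinity produces a set $K'$ on which $\log\psi$ is single-valued. This cannot be right for two reasons: $\psi$ is defined only on a neighbourhood of $K$, so $\log\psi$ has no meaning on arcs running off to infinity; and, more fundamentally, the obstruction to a single-valued logarithm is the winding of $\psi$ around $1$-cycles of $K$, and those cycles persist unchanged in $K\cup(\text{arcs})$ --- attaching arcs to a compact set does not alter its homology. In $X=\C$ the step is vacuous, since a Runge compact in the plane has trivial $H_1$; on a surface of positive genus one must instead first produce a global nowhere-vanishing $\xi_0\in\mathcal{O}(X)$ realizing the winding class of $\psi$ on $K$ (using the Oka--Grauert principle together with surjectivity of $H^1(X,\Z)\to H^1(K,\Z)$ for $K$ Runge), observe that $\psi/\xi_0$ then has a holomorphic logarithm near $K$, and Runge-approximate that logarithm. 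A secondary issue: your period-killing perturbations are chosen ``supported essentially outside $L_n$'' because the new cycles at stage $n$ avoid $L_n$, but the periods of $\varphi_0\,dh$ over cycles already lying in $K$ are small yet generally nonzero and must also be annihilated; for those cycles a perturbation concentrated away from $K$ is of no use, and the correct argument is rather that the required correction $\eta$ can be kept small on $K$ because the residual periods there are small. With these two repairs the sketch would go through, but as written these steps would fail.
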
  
This result is one of the main technical tools used in the proof of our main theorem. 
In particular it handles all the geometrical nuances of being on a Riemann surface.
For a proof of the theorem, see \cite[Theorem 2.1]{franc} or \cite[Theorem 9.13.10]{StainManifoldsAndHolomorphicMappings}. 
 
\section{Semi-admissible sets and the class \texorpdfstring{$\widetilde{\mathcal{A}}$}{~A}} \label{terminology}

We now define the class of sets which we will consider for our approximation result.

\begin{definition}\label{semi-admissible}
    We say that a closed set $E \subseteq X$ is \emph{semi-admissible}, 
    if there exists a locally finite pairwise disjoint family of compact sets $\set{H_\lambda}_{\lambda \in \Lambda}$
    and a closed set $S$ with empty interior, such that $E = S \cup H$, where $H = \bigcup_{\lambda \in \Lambda} H_\lambda$.
\end{definition}
Note that the sets $H$ and $S$ in the above definition are not uniquely determined, but we can always assume $H = \overline{\mathring{E}}$ and $S = \overline{E \setminus H}$. 
In particular the compact sets $H_\lambda$ have non-empty interior.
Let us also remark that when a semi-admissible set $E \subseteq X$ has no holes, the same holds for the compact sets $H_\lambda$.

Note that we do not assume any conditions on the compact sets $H_\lambda$ making up the set $H$.
They need not be connected and their connected components are allowed to accumulate. 
We only require that the compact sets $H_\lambda$ are nicely separated from each other.

The notion of a semi-admissible sets is similar to that of an admissible set, but without the smoothness assumptions.
In particular, any admissible set is semi-admissible. The converse doesn't hold. 
Consider the set $E_1 \subseteq \C$ given by 
\[E_1 = \set{0} \cup \bigcup_{n \in \N} \overline{B\left(\frac{1}{n}, \frac{1}{2n(n+1)}\right)},\]
see figure \ref{Primer1}.
It is semi-admissible, since one can see it as one compact set making up $H$, but it is not admissible.
In particular any compact set is semi-admissible.

For a non-example consider the set $E_2 \subseteq \C$ given by 
\[E_2 = \bigcup_{n \in \Z} \overline{B\left(n, \frac{1}{2}\right)},\]
see figure \ref{Primer2}. 
It is easy to see that the set $E_2$ is not semi-admissible, 
but it is a set of Carleman approximation, the later of which follows from Boivin's characterization.
This demonstrates the main feature of semi-admissible sets, namely that there is some room between the compact sets making it up.
This is crucially used in the proof of the main theorem and is the main difficulty in achieving non-critical Carleman approximation on general sets of Carleman approximation. 
For a further discussion of this topic see Section \ref{Uniform}.  

\begin{figure}[H]
    \centering
    \begin{minipage}{0.49\textwidth}
        \centering
        \includegraphics[width=0.8\textwidth]{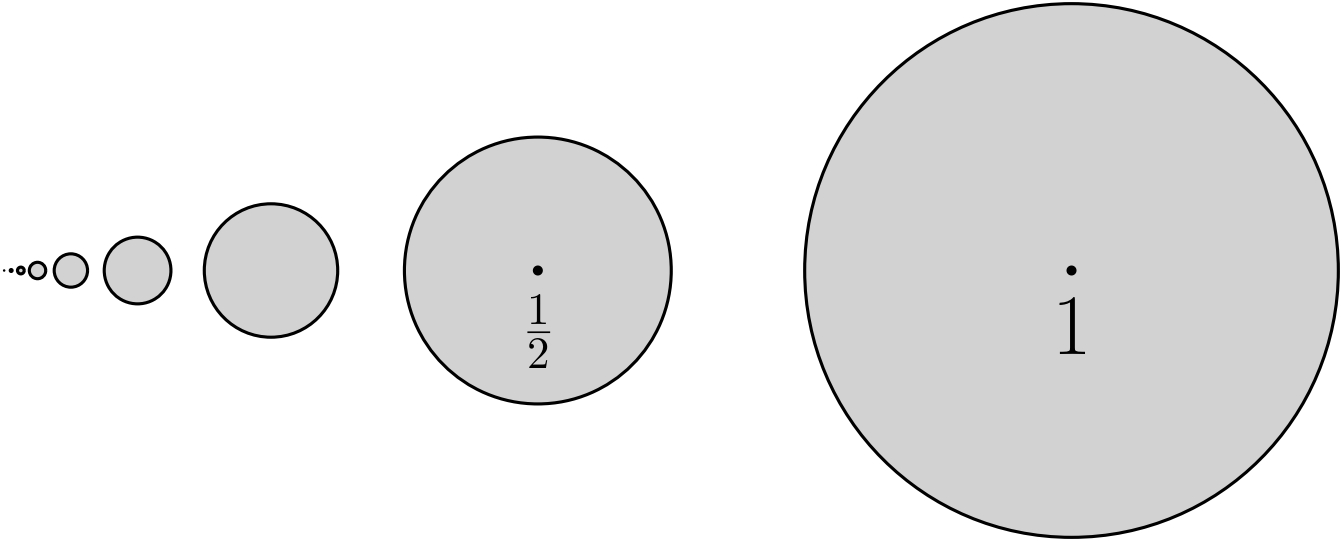}
        \captionsetup{width=0.95\linewidth}
        \caption{The set $E_1$ is semi-admissible, but not admissible.}
        \label{Primer1} 
    \end{minipage}
    \hfill
    \begin{minipage}{0.49\textwidth}
        \centering
        \includegraphics[width=\textwidth]{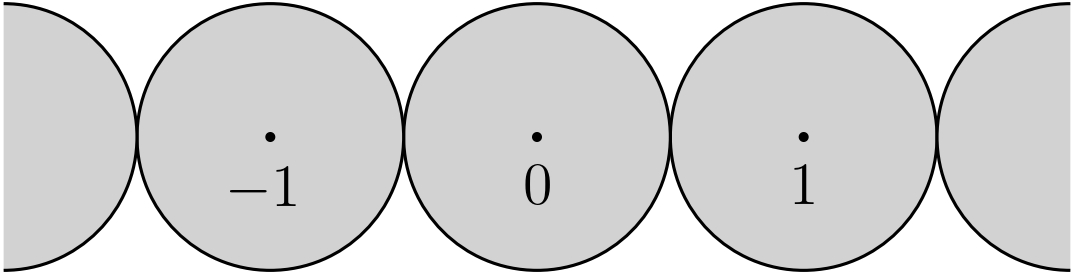}
        \captionsetup{width=\linewidth}
        \caption{The set $E_2$ is a set of Carleman approximation, but it is not semi-admissible.}
        \label{Primer2} 
    \end{minipage}
\end{figure}

It turns out that semi-admissible sets always satisfy condition $\mathcal{G}$. 
This is the content of the next proposition.

\begin{proposition}
    A semi-admissible set $E \subseteq X$ satisfies condition $\mathcal{G}$. 
\end{proposition}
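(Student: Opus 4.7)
The plan is to exploit the decomposition $E = S \cup H$ together with the local finiteness of the family $\{H_\lambda\}_{\lambda \in \Lambda}$. Using the remark following Definition \ref{semi-admissible}, I would start by assuming without loss of generality that $H = \overline{\mathring{E}}$ and $S = \overline{E \setminus H}$, so that $S$ has empty interior and $\mathring{E} \subseteq H$. Condition $\mathcal{G}$ then reduces to controlling how connected components of $\mathring{E}$ can be distributed among the compacts $H_\lambda$.

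The key step is to show that every connected component $C$ of $\mathring{E}$ is contained in a single $H_\lambda$. For this I would observe that, since the family $\{H_\lambda\}_{\lambda \in \Lambda}$ is locally finite and each $H_\lambda$ is closed, the union $\bigcup_{\mu \neq \lambda} H_\mu$ is also closed in $X$ for each fixed $\lambda$ (a locally finite union of closed sets is closed). Pairwise disjointness then makes $H_\lambda$ both open and closed inside $H$, and so $C \cap H_\lambda$ is clopen in $C$. Connectedness forces $C$ to lie inside exactly one $H_\lambda$.

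With this in hand, the construction of $Q$ is immediate. Given a compact $K \subseteq X$, local finiteness implies that only finitely many indices $\lambda_1, \ldots, \lambda_n$ satisfy $H_{\lambda_i} \cap K \neq \emptyset$. I would then take
\[Q = K \cup H_{\lambda_1} \cup \cdots \cup H_{\lambda_n},\]
which is a compact superset of $K$. Any connected component $C$ of $\mathring{E}$ meeting $K$ lies in some $H_\lambda$ by the previous step, and since it meets $K$ the index $\lambda$ must be one of the $\lambda_i$; hence $C \subseteq Q$ and $C \cap (X \setminus Q) = \emptyset$, verifying condition $\mathcal{G}$.

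The only non-trivial ingredient is the separation argument in the second paragraph — without local finiteness one could imagine connected components of $\mathring{E}$ slipping between different $H_\lambda$'s by accumulating at a limit point of the family, which is exactly what the non-example $E_2$ illustrates. Once that point is settled, the rest is a bookkeeping argument using that only finitely many $H_\lambda$ meet a given compact.
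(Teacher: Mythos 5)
Your proof is correct and follows essentially the same approach as the paper: pick the finitely many $H_\lambda$ meeting $K$, take $Q$ to be $K$ union those, and note that each connected component of $\mathring{E}$ lies inside a single $H_\lambda$. The one difference is that you spell out, via the clopen argument with locally finite unions of closed sets, why a component of $\mathring{E}$ must sit inside a single $H_\lambda$; the paper simply asserts this containment without justification, so your version is slightly more complete.
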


\begin{proof}
    Suppose $E = H \cup S$ where $H = \bigcup_{\lambda \in \Lambda} H_\lambda$ is a locally finite union of pairwise disjoint compact sets.
    Let $K \subseteq X$ be any compact set. For each $p \in K$ there exists a neighborhood $U_p$ of $p$ that intersects at most finitely many $H_\lambda$.
    Since $K$ is compact, the open cover $\set{U_p}_{p \in K}$ has a finite subcover and only finitely many of the sets $H_\lambda$ can intersect the union of that subcover.
    It follows that at most finitely many sets $\set{H_{\lambda_l}}_{l=1}^m$ intersect the compact $K$. 
    Now define the compact set $Q = K \cup \bigcup_{l = 1}^m H_{\lambda_l}$ and note $K \subseteq Q$. 
    Suppose $V \subseteq \mathring{E}$ is a connected component of the interior of $E$.
    Then $V \subseteq \mathring{H}_\lambda$ for some $\lambda \in \Lambda$. 
    If $V$ intersect $K$, then by construction $H_\lambda = H_{\lambda_l}$ for some $l = 1, \ldots, m$ and thus $V \subseteq H_{\lambda_l} \subseteq Q$, hence $V \cap \left(X \setminus Q \right) = \emptyset$ as required.  
\end{proof}

We now define the class of functions that we will consider in our approximation result.

\begin{definition} \label{Atilde}
    Let $E \subseteq X$ be a semi-admissible set of the form $E = H \cup S$. 
    We define $\widetilde{\mathcal{A}}(E)$ be the class of continuous functions on $E$ which are holomorphic on some open neighborhood of the set $H$.
\end{definition}
Note that $\widetilde{\mathcal{A}}(E) \subseteq \mathcal{A}(E)$.
We say that a function $f \in \widetilde{\mathcal{A}}(E)$ is \emph{non-critical}, if it is non-critical on a neighborhood of $H$.
The advantage of working with a function $f \in \widetilde{\mathcal{A}}(E)$ is that the control over the differential of $f$ extends past the boundary of the compact sets $H_\lambda$ making up the semi-admissible set $E$.
This is also the reason why we can allow the connected components of the compact sets $H_\lambda$ to accumulate, as seen in figure \ref{Primer1}. 
In particular, if $E$ is a set of uniform approximation and $f \in \widetilde{\mathcal{A}}(E)$ is non-critical, we can, by approximating well enough, find a global holomorphic function which will be close to $f$ on $E$ and non-critical in a neighborhood of finitely many compact sets $H_\lambda$.
In general, if $f$ is just of the class $\mathcal{A}(E)$, we do not have control over its differential near the boundary. 
The best we can do in this case, by just approximating, is to obtain a global holomorphic function which is close to $f$ on $E$ and non-critical on a compact set contained in $\mathring{H}$. 

\section{Proof of main theorem} \label{MainProof}

We give an outline of the proof of the main theorem. Let $E$ be a semi-admissible set such that $X^* \setminus E$ is connected and locally connected, and choose $f \in \widetilde{\mathcal{A}}(E)$.
First we will show that there exists a compact exhaustion $\set{K_n}_{n \in \N}$ of $X$ by $\mathcal{O}(X$)-convex sets 
such that for $n \in \N$ any compact set $H_\lambda$ making up the semi-admissible set $E$ either lies in the interior of $K_n$ or in the complement of $K_n$.

Denote $E_0 = E$ and $E_n = E \cup K_n \cup h(E \cup K_n)$ for $n \in \N$, note that these sets are semi-admissible. 
We will inductively construct a sequence of functions $\set{f_n}_{n \in \N_0}$, equivalently a sequence of pairs $(f_{n-1}, f_n)$ for $n \in \N$, such that $f_n \in \widetilde{\mathcal{A}}(E_n)$ is non-critical and for $n \in \N$ we have $\norm{f_n - f_{n-1}}_{E_{n-1} \cap K_{n+1}} < \tilde{\varepsilon}_{n-1}$ and $f_n = f_{n-1}$ on $E_n \setminus K_{n+1}$.
By choosing the values $\tilde{\varepsilon}_n$ sufficiently small, 
the sequence $\set{f_n}_{n \in \N_0}$ will converge uniformly to a global non-critical holomorphic function with the desired approximation property.     

To begin the proof, we set $f_0 = f$.
Next suppose we have already constructed the function $f_{n-1} \in \widetilde{\mathcal{A}}(E_{n-1})$. 
First we will use Mergelyan approximation to obtain a holomorphic function $h \in \mathcal{O}(X)$ which approximate $f_{n-1}$ on the Runge set $E_{n-1} \cap K_{n+1}$.
By approximating on a slightly bigger compact set well enough, we can make sure that $h$ is non-critical on a neighborhood of the compact sets $H_\lambda$ making up the semi-admissible set $E_{n-1}$, which are contained inside $K_{n+1}$.
It still might happen that $h$ has critical points on the parts of $E_{n-1}$ with empty interior. 
To fix this, we will slightly perturb the function $h$ to move its critical points off of $E_{n-1}$ and so obtain a non-critical holomorphic function $\tilde{h}$ defined on some open neighborhood of $E_{n-1} \cap K_{n+1}$.
Using Theorem \ref{Franci_X} we will then obtain a global non-critical holomorphic function $g \in \mathcal{O}(X)$ which approximates $f_{n-1}$ on $E_{n-1} \cap K_{n+1}$.
Finally, to obtain $f_n$, we will interpolate between $g$ and $f_{n-1}$ using a smooth bump function. 
We will make sure this interpolation happens along a part of $E_n$ with empty interior so as to guarantee that $f_n$ is of the class $\widetilde{\mathcal{A}}(E_n)$.

First we prove the existence of the compact exhaustion of $X$ that was described above. This is the content of the next two lemmas.

\begin{lemma} \label{obstajaV}
    Let $\set{H_\lambda}_{\lambda \in \Lambda}$ be a locally finite pairwise disjoint family of compact sets in $X$.
    Given a compact set $K \subseteq X$ and an open neighborhood $U$ of $K$, there exists an open neighborhood $V$ of $K$ compactly contained in $U$ 
    such that $H_\lambda \cap \overline{V} \neq \emptyset$ implies $H_\lambda \cap K \neq \emptyset$.
\end{lemma}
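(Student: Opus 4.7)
The strategy is to exploit local finiteness to reduce to a finite problem, then use the metric $d_X$ to carve out a neighborhood of $K$ small enough to avoid those members of the family that are disjoint from $K$.

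First I would produce an open neighborhood $W$ of $K$ with $W \subseteq U$ that meets only finitely many of the sets $H_\lambda$. For each $p \in K$, local finiteness of the family provides an open neighborhood of $p$ meeting only finitely many $H_\lambda$, which I intersect with $U$; since $K$ is compact, finitely many such neighborhoods cover $K$, and their union $W$ has the required properties. Denote by $H_{\lambda_1},\dots,H_{\lambda_N}$ the members of the family meeting $W$, and let $B$ be the union of those $H_{\lambda_i}$ which do not meet $K$. Then $B$ is compact as a finite union of compacts, and $B \cap K = \emptyset$ by construction.

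Next I would choose $V$ by a distance argument. Since $K$ is compact and $B$ is closed and disjoint from $K$, one has $\dist(K,B) > 0$, and similarly $\dist(K, X \setminus W) > 0$ whenever $W \ne X$. Taking $r > 0$ strictly smaller than both of these, I set $V = K(r)$. The closure $\overline V$ is contained in the closed set $\{p \in X : d_X(p,K) \le r\}$, which is bounded and hence compact by the Hopf--Rinow theorem, since $d_X$ is a complete Riemannian distance. By the choice of $r$, we have $\overline V \subseteq W \subseteq U$ and $\overline V \cap B = \emptyset$, so $V$ is compactly contained in $U$; and any $H_\lambda$ meeting $\overline V$ must meet $W$, hence lies in the finite list $H_{\lambda_1},\dots,H_{\lambda_N}$, and cannot be one of those contained in $B$, so it meets $K$.

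No step here is genuinely hard. The only point demanding mild care is that one must control the closure $\overline V$ rather than $V$ itself, which is precisely why $r$ is chosen strictly below both of the above distances; the invocation of Hopf--Rinow is only to confirm that the closed $r$-neighborhood of a compact set is itself compact in our setting.
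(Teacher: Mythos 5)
Your argument is correct and follows the same strategy as the paper: use local finiteness plus compactness of $K$ to reduce to finitely many $H_\lambda$, then shrink the resulting neighborhood so its closure avoids those that miss $K$. The only cosmetic difference is that you obtain compactness of $\overline V$ from a closed metric tube via Hopf--Rinow, whereas the paper simply chooses each $U_p$ compactly contained in $U$ from the start.
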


\begin{proof}
    Since the family of compact sets $\set{H_\lambda}_{\lambda \in \Lambda}$ is locally finite, 
    there exists for each $p \in K$ some open neighborhood $U_p$ of $p$ compactly contained in $U$ which intersects at most finitely many compact sets $H_\lambda$. 
    Since the set $K$ is compact, the open cover $\set{U_p}_{p \in K}$ has a finite subcover and its union intersects at most finitely many compact sets $H_\lambda$.
    Define the open set $\widetilde{U}$ by removing from this union those compact sets $H_\lambda$ which do not meet $K$.
    A compactly contained neighborhood $V$ of $K$ inside $\widetilde{U}$ will have the desired properties. 
\end{proof}

\begin{lemma}\label{izcrpanje}
    Let $E = H \cup S$ be a semi-admissible set such that $X^* \setminus E$ is connected and locally connected.
    Then there exists an exhaustion $\set{K_n}_{n \in \N_0}$ of $X$ by $\mathcal{O}(X)$-convex compact sets, 
    such that $K_0 = \emptyset$ and for each $n \in \N$ and $\lambda \in \Lambda$ we have:
    \begin{enumerate}[(i)]
        \item $h(E \cup K_n) = \emptyset$,
        \item If $H_\lambda \cap K_n \neq \emptyset$, then $H_\lambda \subseteq K_n$.
    \end{enumerate}
\end{lemma}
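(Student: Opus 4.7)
The plan is to begin with a standard $\mathcal{O}(X)$-convex compact exhaustion $\{L_n\}$ of $X$ with $L_n \Subset L_{n+1}$ and modify each $L_n$ in stages to enforce the two properties. To secure condition (i), observe that by Proposition \ref{locallyconnected} the hypothesis that $X^*\setminus E$ is locally connected at $\infty$ is equivalent to the BEH property, so for each $n$ there is a compact $\tilde L_n \supseteq L_n$ with $h(E \cup \tilde L_n) = \emptyset$. Passing to the $\mathcal{O}(X)$-hull $\widehat{\tilde L_n}$ preserves both $\mathcal{O}(X)$-convexity and (i): any hole of $E \cup \widehat{\tilde L_n}$ would have to be contained in some unbounded component of $X\setminus(E\cup\tilde L_n)$, yet such a component cannot meet the bounded components of $X\setminus\tilde L_n$ making up $h(\tilde L_n)$.

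A key structural observation driving the rest of the proof is that whenever a compact $C \subseteq X$ satisfies (i), one has $h(C)\subseteq E$: if $p$ lies in a bounded component $W$ of $X\setminus C$ but $p\notin E$, then the component $V$ of $X\setminus(E\cup C)$ containing $p$ is forced to be unbounded by (i), yet $V \subseteq W$ since $V$ cannot cross $\partial W \subseteq C$, making $V$ bounded, a contradiction. Consequently, when hulling a compact $C$ that already satisfies (i) and (ii), the only way (ii) can fail for $\widehat C$ is that some $H_\mu$ has components distributed between the bounded region $h(C) \subseteq E$ and $X\setminus\widehat C$. Because $h(C)$ lies in the compact $\widehat C$ and the family $\{H_\lambda\}$ is locally finite, only finitely many such $H_\mu$ can arise and they can all be adjoined.

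This motivates an iterative construction of $K_n$ applied to $\widehat{\tilde L_n}$: adjoin all $H_\lambda$ meeting the current compact, thicken by Lemma \ref{obstajaV} to a buffered neighborhood whose closure still meets only the same $H_\lambda$'s, and take the $\mathcal{O}(X)$-hull. The augmentation step preserves (i) trivially (it adds subsets of $E$), and the hull step preserves (i) by the argument of the first paragraph. The main technical obstacle is termination of the iteration: by local finiteness only finitely many $H_\lambda$'s can ever appear in the augmentation step, and once the augmentation step adds nothing new the hull operation is idempotent and the iteration halts, yielding a compact $K_n$ satisfying (i), (ii) and $\mathcal{O}(X)$-convexity. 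Finally, nesting the constructions across $n$ by initializing the $n$-th iteration from a compact containing $K_{n-1}\cup L_n$ gives the required exhaustion.
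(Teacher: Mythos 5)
Your overall approach --- use the BEH property to secure (i), take $\mathcal{O}(X)$-hulls for convexity, and adjoin the finitely many $H_\lambda$ meeting the compact to secure (ii) --- is the same in spirit as the paper's, and the observation that (i) forces $h(C) \subseteq E$, hence $h(C) \subseteq \mathring{E} \subseteq H$, is exactly the right structural fact. However, the thickening step by Lemma \ref{obstajaV} is both superfluous and potentially harmful: replacing the augmented compact $C$ by the closure $\overline{V}$ of a buffered neighborhood can destroy property (i). That $E \cup C$ is hole-free does not imply $E \cup \overline{V}$ is hole-free for a compact $\overline{V} \supseteq C$; enlarging a compact can create relatively compact complementary components bounded jointly by $E$ and $\partial V$. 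Since your argument that hulling preserves (i) requires (i) to already hold for the compact being hulled, a broken (i) is not repaired by the subsequent hull step, and nothing in the construction uses the buffer --- it should simply be dropped.

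The termination argument is also not rigorous. Local finiteness bounds the number of $H_\lambda$ adjoined per round, but as the compacts grow across rounds nothing a priori bounds the total, and the claim that ``once augmentation adds nothing the hull operation is idempotent and the iteration halts'' is a tautology, not a proof that such a stage is reached. The fact you are missing --- and what the paper proves directly --- is that a single augmentation already suffices. If $C_0$ is $\mathcal{O}(X)$-convex and satisfies (i), and $C_1$ is obtained by adjoining the finitely many $H_\lambda$ meeting $C_0$, then $C_1$ satisfies (i) since $C_1 \cup E = C_0 \cup E$, satisfies (ii) by pairwise disjointness of the $H_\lambda$, and already has no holes: a relatively compact component $U$ of $X \setminus C_1$ cannot meet $X \setminus E$ (otherwise it would contain a non--relatively-compact component of $X \setminus (E \cup C_1)$, using (i)), so $U \subseteq \mathring{E} \subseteq H$; being connected, $U$ lies in a single $H_\lambda$ with $\emptyset \neq \partial U \subseteq \overline{U} \cap C_1 \subseteq H_\lambda \cap C_1$, and then (ii) forces $H_\lambda \subseteq C_1$, hence $U \subseteq C_1$, a contradiction. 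This is essentially the paper's proof, with its $L_n = \Delta_{\nu_n} \cup \overline{h(E \cup \Delta_{\nu_n})}$ playing the role of your $\tilde{L}_n$, and with no iteration or thickening.
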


\begin{proof}
    Let $\set{H_\lambda}_{\lambda \in \Lambda}$ be the locally finite pairwise disjoint family of compact sets that makes up $H$ 
    and let $\set{\Delta_\nu}_{\nu \in \N_0}$ be an exhaustion of $X$ by $\mathcal{O}(X)$-convex compact sets. 
    We will construct the exhaustion $\set{K_n}_{n \in \N_0}$ of $X$ and an associated sequence of positive integers $\set{\nu_n}_{n \in \N_0}$ inductively as follows.
    For $n = 0$ let $\nu_0 = 0$ and $K_0 = \emptyset$.
    For $n \ge 1$ let $\nu_n > \nu_{n-1}$ be large enough, such that $\Delta_{\nu_n}$ contains $K_{n-1}$ in its interior.
    Let $L_n = \Delta_{\nu_n} \cup \overline{h(E \cup \Delta_{\nu_n})}$. Note that $L_n$ is compact, since $E$ has the BEH property, and that $h(E \cup L_n) = \emptyset$.
    Since the family $\set{H_\lambda}_{\lambda \in \Lambda}$ is locally finite, 
    only finitely many compact sets $H_\lambda$ intersect $L_n$. Let $K_n$ be the union of $L_n$ and these compact sets.
    Since $H_\lambda \subseteq E$, we have $E \cup K_n = E \cup L_n$.
    
    The sets $K_n$ are clearly compact and form an exhaustion of $X$. 
    Property (i) holds, since $h(E \cup K_n) = h(E \cup L_n) = \emptyset$, and property (ii) holds by construction.
    Thus we need only check that a given set $K_n$ is $\mathcal{O}(X)$-convex.
    To this end let $U$ be a connected component of $X \setminus K_n$.
    If $U \subseteq E$, then $U \subseteq H_\lambda$ for some $\lambda \in \Lambda$ and in particular $\partial U \cap H_\lambda \neq \emptyset$.
    Since $\partial U \subseteq K_n$, this implies that $H_\lambda$ must be one of the compact sets making up $K_n$. It follows that $U \subseteq K_n$, which is a contradiction. 
    Thus the set $(X \setminus E) \cap U$ is non-empty and we can choose $V$ to be one of its connected components.
    Observe that 
    \begin{align*}
        V& \subseteq X \setminus (E \cup K_n) \subseteq X \setminus (E \cup \Delta_{\nu_n})\\
        \partial V& \subseteq \partial (X \setminus E) \cup \partial U \subseteq E \cup \partial K_n = E \cup \partial L_n \subseteq E \cup \Delta_{\nu_n},
    \end{align*}
    which together implies that $V$ is a connected component of $X \setminus (E \cup \Delta_{\nu_n})$.
    Since $V \subseteq U$ and $U \cap h(E \cup \Delta_{\nu_n}) = \emptyset$, the set $V$ is not relatively compact, hence the same must hold for the set $U$.
    This proves that $K_n$ has no holes, therefore it is $\mathcal{O}(X)$-convex.
\end{proof}

Next we show that approximating a non-critical holomorphic function on a compact set well enough yields a function which is also non-critical on that set 
and that a suitable limit of non-critical holomorphic functions is again non-critical.
Although these are standard results in approximation theory, we provide proofs for the sake of completion. 

\begin{lemma} \label{exists_r}
    Let $K, H \subseteq X$ be sets such that $K$ is compact and that $K \subseteq \mathring{H}$. 
    Let $\set{(U_j,\phi_j)}_{j=1}^l$ be a finite cover of $K$ with charts. 
    Then there exists some $0 < r \le 1$, such that for every $p \in K$ there is some chart $(U_j, \phi_j)$ such that $p \in U_j$ and
    \[B(\phi_j(p),r) \subseteq \phi_j(H \cap U_j).\] 
\end{lemma}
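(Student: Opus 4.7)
The plan is a direct compactness argument with a triangle-inequality twist to account for the fact that the charts $\phi_j$ need not be isometries and may be stretched very differently. The key observation is that for each $p \in K$, since $p \in K \subseteq \mathring{H}$ and the $U_j$ cover $K$, we can pick an index $j(p) \in \{1,\dots,l\}$ with $p \in U_{j(p)}$. Then $\phi_{j(p)}(\mathring{H} \cap U_{j(p)})$ is an open subset of $\C$ containing the point $\phi_{j(p)}(p)$, so there exists some $r_p \in (0,\tfrac{1}{2}]$ with
\[
B(\phi_{j(p)}(p),\,2r_p) \subseteq \phi_{j(p)}(\mathring{H} \cap U_{j(p)}) \subseteq \phi_{j(p)}(H \cap U_{j(p)}).
\]

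Next I would form the open sets $W_p := \phi_{j(p)}^{-1}(B(\phi_{j(p)}(p),r_p)) \subseteq X$; these are open neighborhoods of the corresponding points $p$ and together cover $K$. By compactness of $K$ I extract a finite subcover $W_{p_1},\dots,W_{p_m}$ and set
\[
r := \min\{r_{p_1},\dots,r_{p_m}\} > 0.
\]
To verify that $r$ has the required property, I take an arbitrary $q \in K$ and choose an index $i$ with $q \in W_{p_i}$. Then $q \in U_{j(p_i)}$ and $|\phi_{j(p_i)}(q) - \phi_{j(p_i)}(p_i)| < r_{p_i}$, so the triangle inequality gives
\[
B(\phi_{j(p_i)}(q),\,r) \subseteq B(\phi_{j(p_i)}(p_i),\,2r_{p_i}) \subseteq \phi_{j(p_i)}(H \cap U_{j(p_i)}),
\]
and the chart $(U_{j(p_i)},\phi_{j(p_i)})$ witnesses the conclusion for $q$.

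The main conceptual obstacle is that different points of $K$ may be naturally hosted by different charts, and the Euclidean radius in which a ball around $\phi_j(p)$ still lies in $\phi_j(H \cap U_j)$ depends on the chart being used. Hence a naive Lebesgue-number argument for the distance $d_X$ would not directly produce the required uniform $r$. The remedy is precisely to attach a preferred chart $j(p_i)$ to each piece $W_{p_i}$ of the finite subcover and always use that chart for points falling into that piece; the single number $r$ is then reinterpreted in (possibly) different charts at different points, with the triangle inequality ensuring that no further loss occurs.
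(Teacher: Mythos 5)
Your proof is correct, and it reaches the conclusion by a genuinely different route than the paper's. The paper defines, for each chart index $j$, an auxiliary function $d_j(p) = \dist\bigl(\phi_j(p), \phi_j(H\cap U_j)^c\bigr)$ on $K$ (extended by $0$ off $U_j$), asserts the $d_j$ are continuous, takes their pointwise maximum $d$, and sets $r$ to be (essentially) the positive minimum of $d$ over the compact $K$. You instead attach to each $p\in K$ a preferred chart $j(p)$, choose a Euclidean radius $r_p$ so that the \emph{doubled} ball $B\bigl(\phi_{j(p)}(p),2r_p\bigr)$ still lies in $\phi_{j(p)}(H\cap U_{j(p)})$, pull back the $r_p$-ball to a neighborhood $W_p$ of $p$, extract a finite subcover of $K$, and let $r=\min_i r_{p_i}$; the triangle inequality then lets every $q\in W_{p_i}$ borrow the chart $j(p_i)$. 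Both are compactness arguments, but yours is arguably the more robust of the two: the paper's assertion that $d_j$ is continuous across $\partial U_j$, where it is defined to vanish, is not entirely automatic and needs either a short argument or a preliminary shrinking of the sets $U_j$, whereas your finite-subcover-plus-doubling scheme sidesteps that question entirely. Your closing remark explaining why a Lebesgue-number argument in the ambient metric $d_X$ would not directly suffice — because the relevant Euclidean scale is chart-dependent — and how pinning a preferred chart to each piece of the subcover resolves it, identifies precisely the one nontrivial point of the lemma.
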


\begin{proof} 
    For $j = 1, \ldots, l$ define $d_j \colon K \to [0, \infty)$ by 
    \[d_j(p) = \begin{cases} 
    \dist(\phi_j(p), \phi_j(H \cap U_j)^c); \quad p \in U_j \\
    0; \quad p \notin U_j
    \end{cases}\] 
    and note it is a continuous function. If $p \in K \cap U_j$, then $\dist(\phi_j(p), \phi_j(H \cap U_j)^c) > 0$, since $K \subseteq \mathring{H}$.
    Now define $d \colon K \to [0, \infty)$ by 
    \[d(p) = \max_{j=1,\ldots,l} d_j(p).\]
    Note that this function is also continuous and strictly positive, since each $p \in K$ is contained in some neighborhood $U_j$.
    Thus the function $d$ attains a positive minimum $m$ on the compact set $K$. We claim $r = \min \set{m,1}$ is the desired number. 
    Indeed, suppose there exists some $p \in K$ such that $B(\phi_j(p),r) \cap \phi_j(H \cap U_j)^c \neq \emptyset$ for all $j = 1, \ldots, l$ such that $p \in U_j$.
    This implies $d_j(p) < r$ for all $j = 1, \ldots, l$ and hence $d(p) < r \le m$ as well, which is a contradiction. This concludes the proof.
\end{proof}

\begin{lemma}\label{MergelyanSetup}
    Let $K, H \subseteq X$ be compact sets such that $K \subseteq \mathring{H}$ and let $f \in \mathcal{O}(U)$ be a non-critical holomorphic function on some open neighborhood $U$ of $H$.
    Then there exists some $\delta > 0$ such that if a holomorphic function $g \in \mathcal{O}(U)$ satisfies $\norm{f-g}_{H} < \delta$, then $g$ is non-critical on $K$.
\end{lemma}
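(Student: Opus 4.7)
The plan is to reduce the statement to a quantitative Cauchy estimate carried out in local charts. First, I would choose a finite cover $\{(U_j, \phi_j)\}_{j=1}^l$ of $K$ by holomorphic charts with each $U_j$ compactly contained in the neighborhood $U$ of $H$, and apply Lemma \ref{exists_r} to obtain some $r \in (0,1]$ such that every $p \in K$ admits an index $j = j(p)$ satisfying $B(\phi_j(p), r) \subseteq \phi_j(H \cap U_j)$. Write $\tilde{f}_j = f \circ \phi_j^{-1}$ and $\tilde{g}_j = g \circ \phi_j^{-1}$; since $\phi_j$ is a biholomorphism onto its image, the non-criticality of $g$ at $p$ is equivalent to the condition $\tilde{g}_{j(p)}'(\phi_{j(p)}(p)) \neq 0$.

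The core estimate is the Cauchy bound applied to $\tilde{g}_{j(p)} - \tilde{f}_{j(p)}$ on the disc $B(\phi_{j(p)}(p), r)$, which lies in the common domain by Lemma \ref{exists_r}:
\[
\bigl|\tilde{g}_{j(p)}'(\phi_{j(p)}(p)) - \tilde{f}_{j(p)}'(\phi_{j(p)}(p))\bigr| \;\leq\; \frac{1}{r}\sup_{B(\phi_{j(p)}(p),\,r)}\bigl|\tilde{g}_{j(p)} - \tilde{f}_{j(p)}\bigr| \;\leq\; \frac{\norm{g-f}_H}{r}.
\]
To conclude I need a uniform positive lower bound $m > 0$ for $\bigl|\tilde{f}_{j(p)}'(\phi_{j(p)}(p))\bigr|$ as $p$ ranges over $K$. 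Once such an $m$ is in hand, setting $\delta = mr/2$ and invoking the triangle inequality gives $\bigl|\tilde{g}_{j(p)}'(\phi_{j(p)}(p))\bigr| \geq m/2 > 0$ whenever $\norm{f-g}_H < \delta$, which is exactly the required non-criticality of $g$ on $K$.

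The only delicate point is establishing this uniform lower bound $m$; a direct continuity argument fails because the selected index $j(p)$ may jump as $p$ moves. I would handle this with a standard compactness covering: for each $p_0 \in K$ fix a good index $j_0 = j(p_0)$, observe that the inclusion $B(\phi_{j_0}(p), r/2) \subseteq \phi_{j_0}(H \cap U_{j_0})$ persists on an open neighborhood $V_{p_0}$ of $p_0$ (the factor $r/2$ builds in the necessary buffer), and shrink $V_{p_0}$ further so that continuity of $|\tilde{f}_{j_0}'|$ yields $\bigl|\tilde{f}_{j_0}'(\phi_{j_0}(q))\bigr| \geq \tfrac{1}{2}\bigl|\tilde{f}_{j_0}'(\phi_{j_0}(p_0))\bigr|$ for $q \in V_{p_0}$. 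Extracting a finite subcover of $K$ and taking the minimum of the resulting finitely many positive numbers produces the desired $m$. The Cauchy estimate is then applied with radius $r/2$ in place of $r$, contributing only a harmless factor of two to the final choice of $\delta$.
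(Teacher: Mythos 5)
Your proof is correct and follows essentially the same strategy as the paper's: reduce to local charts, use Lemma \ref{exists_r} to get a uniform radius on which to apply a Cauchy estimate for $\tilde g_j' - \tilde f_j'$, establish a uniform positive lower bound $m$ on $\abs{\tilde f_j'}$ over $K$, and conclude by the triangle inequality with $\delta$ comparable to $mr$. The one organizational difference is how you obtain $m$: the paper takes a compactly-contained subordinate cover $\set{V_j}$ of $H$ and minimizes $\abs{(f\circ\phi_j^{-1})'}$ over the compact sets $\phi_j(H\cap\overline{V_j})$, whereas you run a local continuity argument at each $p_0 \in K$ and extract a finite subcover --- both are standard compactness arguments and equally valid.
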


\begin{proof}
    Let $\set{(U_j,\phi_j)}_{j=1}^l$ be a finite cover of $H$ with charts. 
    Choose an open cover $\set{V_j}_{j=1}^l$ of $H$ such that the set $V_j$ is compactly contained in $U_j$.
    Since $f$ is non-critical, the function $\abs{(f \circ (\phi_j)^{-1})'}$ is strictly positive on the compact sets $H \cap \overline{V_j}$ and thus attains a positive minimum $m^j$.
    Denote $m = \min \set{m^j \mid j=1, \ldots, l}$ and note $m > 0$.
    Let $r > 0$ be given by lemma \ref{exists_r} for the compact sets $K \subseteq H$ and charts $\set{(V_j,\phi_j)}_{j=1}^l$.
    We claim that $\delta = \frac{mr}{4}$ is the desired number. Indeed, assume $g \in \mathcal{O}(U)$ is a holomorphic function such that $\norm{f-g}_{H} < \delta$ and pick $p \in K$.
    Let $(V_j,\phi_j)$ be the chart corresponding to $p$ by lemma \ref{exists_r}. 
    Since $B(\phi_j(p),r) \subseteq \phi_j(H \cap V_j)$, we have $\partial B\left(\phi_j(p),\frac{r}{2}\right) \subseteq \phi_j(H \cap V_j)$.
    Thus we can use the fact that $\abs{(f - g) \circ \phi_j^{-1}} < \delta$ on $\partial B\left(\phi_j(p),\frac{r}{2}\right)$ and Cauchy estimates to obtain
    \begin{align*}
        \abs{((f-g) \circ (\phi_j)^{-1})'(\phi_j(p))} &\le \frac{1}{2\pi} \int_{\partial B\left(\phi_j(p),\frac{r}{2}\right)} \frac{\abs{(f-g) \circ (\phi_j)^{-1}(\zeta)}}{\abs{\zeta - \phi_j(p)}^2} \, \abs{d \zeta} \\
        &\le \frac{1}{2\pi} \int_{\partial B\left(\phi_j(p),\frac{r}{2}\right)} \frac{4\delta}{r^2}\, \abs{d \zeta} =\frac{2\delta}{r} = \frac{m}{2}. 
    \end{align*}
    It now follows
    \begin{align*}
    \abs{(g \circ (\phi_j)^{-1})'(\phi_j(p))} &\ge \abs{(f \circ (\phi_j)^{-1})'(\phi_j(p))} - \abs{((f-g) \circ (\phi_j)^{-1})'(\phi_j(p))}\\
    &\ge m - \frac{m}{2} = \frac{m}{2} > 0,
    \end{align*}
    which shows that $p$ is not a critical point of $g$. It follows that $g$ is non-critical on $K$.
\end{proof}

\begin{lemma} \label{appox_lemma}
    Let $\set{K_n}_{n \in \N_0}$ be a compact exhaustion of the Riemann surface $X$ and let $\set{f_n}_{n \in \N_0}$ be a sequence of functions,
    such that $f_n \in \mathcal{O}(K_n)$ are non-critical. Then there exists a sequence of positive numbers $\set{\delta_n}_{n \in \N_0}$
    such that if $\norm{f_n-f_{n-1}}_{K_{n-1}} < \delta_{n-1}$ for each $n \in \N$, 
    the point-wise limit of the functions $\set{f_n}_{n \in \N_0}$ converges uniformly on compact sets of $X$ to a global non-critical holomorphic function $F = \lim_{n \to \infty} f_n$.
\end{lemma}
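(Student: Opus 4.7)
The plan is to choose the $\delta_n$ small enough to guarantee two things simultaneously: summability of $\sum\delta_n$ (which, via telescoping, gives uniform convergence of $\{f_n\}$ on compacts and hence a holomorphic limit $F$ by Weierstrass' theorem), and an a~priori bound on $\|F - f_n\|_{K_n}$ tight enough that Lemma \ref{MergelyanSetup} forces $F$ to be non-critical on each $K_{n-1}$.

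To set up the non-criticality threshold, I would apply Lemma \ref{MergelyanSetup} with $K = K_{n-1}$, $H = K_n$ (using the standard property $K_{n-1}\subseteq\mathring{K}_n$ of a compact exhaustion) and $f = f_n$, which by hypothesis is non-critical on a neighborhood of $K_n$. This yields some $\eta_n > 0$ such that any holomorphic function $g$ on a neighborhood of $K_n$ with $\|g-f_n\|_{K_n} < \eta_n$ is automatically non-critical on $K_{n-1}$. With the whole sequence $\{\eta_n\}$ available (the lemma fixes $\{f_n\}$ in advance), I would then define, for instance,
\[
  \delta_n = 2^{-n-1}\min(1,\eta_1,\dots,\eta_n),
\]
which is summable and, more importantly, satisfies $\sum_{k\ge n}\delta_k < \eta_n$ for every $n\ge 1$.

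Assuming the hypothesis $\|f_n - f_{n-1}\|_{K_{n-1}} < \delta_{n-1}$, I would conclude in two steps. First, for any compact $L\subseteq X$ there is $n_0$ with $L\subseteq K_{n_0}$, and telescoping gives $\|f_n - f_m\|_L < \sum_{k=m}^{n-1}\delta_k$ for $n > m \ge n_0$, so $\{f_n\}$ is uniformly Cauchy on $L$; the limit $F$ is holomorphic on $X$ by Weierstrass' theorem since each $f_k$ is holomorphic on a neighborhood of $K_k$ and any point of $X$ eventually lies in $\mathring{K}_n$. Second, the same telescoping yields
\[
  \|F - f_n\|_{K_n} \le \sum_{k\ge n}\delta_k < \eta_n,
\]
so Lemma \ref{MergelyanSetup} gives that $F$ is non-critical on $K_{n-1}$ for every $n$, hence on all of $X$.

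There is no real obstacle; the argument is essentially bookkeeping. The only delicate point is that $\eta_n$ depends on $f_n$, so the $\delta_n$ must be specified after the full sequence $\{f_n\}$ is known, but this is compatible with the phrasing of the lemma, which fixes $\{f_n\}$ before asserting the existence of $\{\delta_n\}$.
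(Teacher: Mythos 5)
Your proof is correct and follows essentially the same route as the paper: apply Lemma \ref{MergelyanSetup} to each pair $K_{n-1}\subseteq K_n$ with the non-critical function $f_n$ to obtain thresholds $\eta_n$, choose $\set{\delta_n}$ so that the tails $\sum_{k\ge n}\delta_k$ stay below $\eta_n$, and telescope to get a holomorphic limit $F$ with $\norm{F-f_n}$ small enough to force non-criticality on $K_{n-1}$. If anything, your estimate $\norm{F-f_n}_{K_n}<\eta_n$ is stated on the correct set $K_n$ (the set $H$ in Lemma \ref{MergelyanSetup}), whereas the paper records its tail bound only on $K_{n-1}$; this is a harmless slip, since the same telescoping argument also gives the bound on $K_n$, but your version is the cleaner one.
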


\begin{proof} 
    For each $n \in \N$ let $\tilde{\delta}_n$ be given by Lemma \ref{MergelyanSetup} for the compact sets $K_{n-1} \subseteq K_n$ and the holomorphic function $f_n$.
    Let $\set{\delta_n}_{n \in \N_0}$ be a sequence of positive numbers, such that $\sum_{k=n}^\infty \delta_k < \tilde{\delta}_n$ holds for each $n \in \N$.
    Fix some $n \in \N$. Since 
    \[\sum_{k=n}^{\infty} \norm{f_{k+1}-f_k}_{K_{n-1}} \le \sum_{k=n}^{\infty} \norm{f_{k+1}-f_k}_{K_k} \le \sum_{k=n}^{\infty} \delta_k < \tilde{\delta}_n < \infty,\]
    the series $\sum_{k=n}^{\infty} (f_{k+1}-f_k)$ converges uniformly on $K_{n-1}$.
    For $N > n$ we can then write $f_N = f_n + \sum_{k=n}^{N-1} (f_{k+1}-f_k)$ and by taking the limit as $N \to \infty$, we get
    \[F = f_n + \sum_{n=k}^{\infty} (f_{k+1}-f_k).\]
    Since $F$ is a sum of two holomorphic functions, $F$ is holomorphic on $\mathring{K}_{n-1}$.
    The fact that $F$ is also non-critical on $\mathring{K}_{n-1}$ follows directly from 
    \[\norm{F - f_n}_{K_{n-1}} \le \sum_{k=n}^{\infty} \norm{f_{k+1}-f_k}_{K_{n-1}} < \tilde{\delta}_n,\]
    and our choice of $\tilde{\delta}_n$ in Lemma \ref{MergelyanSetup}.
    Since $n \in \N$ was arbitrary and the compact sets $\set{K_n}_{n \in \N_0}$ exhaust $X$, 
    this shows that $F$ really is a global non-critical holomorphic function.
\end{proof}

In the next lemma we construct a biholomorphic map which moves points off of sets with empty interior.
The map will be given as a flow of a holomorphic vector field.

\begin{lemma}\label{bigMove}
    Let $K \subseteq X$ be a compact set, $U$ an open neighborhood of $K$ and $\delta > 0$.
    Suppose there are points $p_1, p_2, \ldots, p_n \in K$ such that for each $j = 1,\ldots, n$ there exists an open neighborhood $W_j$ of $p_j$ such that $W_j \cap K \subseteq \partial K$. 
    Then there exist some open neighborhood $V \subseteq U$ of $K$ and a biholomorphic map $\gamma \colon V \to \gamma(V) \subseteq X$, 
    such that $\norm{\gamma - \id}_V < \delta$ and $p_j \notin \gamma(K)$ for all $j = 1,\ldots, n$.
\end{lemma}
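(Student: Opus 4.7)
The plan is to realize $\gamma$ as the time-$1$ flow of a small holomorphic vector field $Z$ defined on a neighborhood of $K$, chosen so that each $p_j$ is moved off of $\gamma(K)$. The topological content of the hypothesis $W_j \cap K \subseteq \partial K$ is that $W_j \cap K$ has empty interior (the boundary of any closed set has empty interior in $X$), so there is enough room for arbitrarily small perturbations of $\id$ to pull each $p_j$ off $K$. To get a convenient finite-dimensional family of perturbations, I would exploit that $X$ is Stein: by the Gunning--Narasimhan theorem $X$ carries a nowhere-vanishing holomorphic vector field $V_0$, and by interpolation on $X$ (Cartan's Theorem B) there exist $g_1, \ldots, g_n \in \mathcal{O}(X)$ with $g_k(p_j) = \delta_{jk}$. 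For $\alpha = (\alpha_1, \ldots, \alpha_n) \in \C^n$ set
\[Z_\alpha = \Bigl(\sum_{k=1}^n \alpha_k g_k\Bigr) V_0,\]
a holomorphic vector field on $X$ satisfying $Z_\alpha(p_j) = \alpha_j V_0(p_j)$. For $\alpha$ in a sufficiently small ball $N \subseteq \C^n$ around the origin, the time-$1$ flow $\gamma_\alpha$ of $Z_\alpha$ is defined on a neighborhood $V \subseteq U$ of $K$, is a biholomorphism onto its image, and satisfies $\norm{\gamma_\alpha - \id}_V < \delta$.

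Next I would show that $\alpha \in N$ can be chosen so that $p_j \notin \gamma_\alpha(K)$ for every $j$. Writing $\phi_t^\alpha$ for the flow of $Z_\alpha$, define holomorphic maps $F_j \colon N \to X$ by $F_j(\alpha) = \phi_{-1}^\alpha(p_j)$, so that $F_j(\alpha) \in K$ if and only if $p_j \in \gamma_\alpha(K)$. Differentiating the flow ODE at $\alpha = 0$ (where $Z_0 \equiv 0$, hence $\phi_t^0 = \id$) yields
\[\left.\frac{\partial F_j}{\partial \alpha_k}\right|_{\alpha = 0} = -\delta_{jk} V_0(p_j),\]
so $dF_j(0) \colon T_0\C^n \to T_{p_j}X$ is surjective and $F_j$ is an open map near $0$. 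After shrinking $N$ so that $F_j(N) \subseteq W_j$ for every $j$, if $F_j^{-1}(K) \cap N$ had nonempty interior then $F_j$ would send a nonempty open subset of $N$ into $K \cap W_j \subseteq \partial K$, which is impossible since $F_j$ is open and $\partial K$ has empty interior. Hence each $F_j^{-1}(K)$ is closed and nowhere dense in $N$, so the complement of $\bigcup_{j=1}^n F_j^{-1}(K)$ is dense in $N$, and any $\alpha$ in this complement produces $\gamma := \gamma_\alpha$ with the required properties.

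The main difficulty is engineering the parametric family $(Z_\alpha)$ so that the evaluation maps $F_j$ are submersions at the origin, as this is what converts the purely topological empty-interior hypothesis into an actual existence statement. Using a globally nowhere-vanishing vector field on the Stein manifold $X$ combined with Weierstrass-type interpolation to decouple the $n$ parameters cleanly achieves this, and the remainder is a Baire-category dodge that is standard once the submersion property is in hand.
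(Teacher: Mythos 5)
Your proof is correct, and it shares the core mechanism with the paper's argument — both realize $\gamma$ as the flow of a holomorphic vector field and exploit that the hypothesis $W_j \cap K \subseteq \partial K$ means $W_j \setminus K$ is dense in $W_j$, so an open holomorphic map will generically push $p_j$ off $K$. The execution differs, though. You build an $n$-complex-parameter family of fields $Z_\alpha = \bigl(\sum_k \alpha_k g_k\bigr) V_0$ (needing Gunning--Narasimhan for a nowhere-vanishing $V_0$ and Cartan interpolation for the $g_k$) and run a submersion/Baire-category argument over $N \subseteq \C^n$. The paper instead takes a \emph{single} holomorphic vector field $v$ with $v(p_j) \neq 0$ for all $j$, lets $t$ range over a small disc $\D_\tau \subseteq \C$ of complex time, and sets $\phi_j(t) = \Phi(t, p_j)$. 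Since $\phi_j'(0) = v(p_j) \neq 0$, each $\phi_j \colon \D_\tau \to W_j$ is a non-constant holomorphic map into a Riemann surface, hence open; thus each $\phi_j^{-1}(W_j \setminus K)$ is open and dense in $\D_\tau$, and a finite intersection of dense open sets is already nonempty without invoking Baire. Taking $t_0$ in this intersection and $\gamma = \Phi(-t_0, \cdot)$ finishes the argument. So the paper's route is lighter: no globally nonvanishing field, no explicit interpolation, no $n$-parameter transversality — the one complex time direction already decouples all the $p_j$ simultaneously because openness of $\phi_j$ is automatic for non-constant holomorphic maps into a one-dimensional target. Your version is more flexible and would generalize to higher-dimensional targets where openness is not automatic, but in this one-dimensional setting it is more machinery than the lemma requires.
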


\begin{proof}
    Let $v \colon X \to TX$ be any holomorphic vector field, such that $v(p_j) \neq 0$ holds for all $j= 1, \ldots, n$. 
    Then there exists some $\tau > 0$ and some open neighborhood $\widetilde{V}$ of $K$ such that the flow map
    $\Phi \colon B(0,\tau) \times \widetilde{V} \to X$ of the vector field $v$ is well-defined and holomorphic.
    In fact, for any $t \in B(0,\tau)$ the time-$t$ map on $\widetilde{V}$ given by $p \mapsto \Phi(t, p)$ is biholomorphic onto its image.      
    Let $V$ be an open neighborhood of $K$ that is contained in $U$ and compactly contained in $\widetilde{V}$. 
    Since the map $\Phi \colon B(0,\tau) \times V \to X$ is jointly continuous, the map $t \mapsto \norm{\Phi(t, \cdot) - \id}_V$ is continuous.
    So by choosing $\tau$ small enough, we may assume $\norm{\Phi(t, \cdot) - \id}_V < \delta$ for any $t \in B(0,\tau)$.
    Choosing $\tau$ even smaller if necessary, we can also assume $\Phi(t, p_j) \in W_j$ for all $t \in B(0,\tau)$ and $j=1, \ldots, n$.

    For $j=1, \ldots, n$ we now define a holomorphic map $\phi_j \colon B(0,\tau) \to W_j$ by 
    \[\phi_j(t) = \Phi(t, p_j).\]
    Since 
    \[\frac{d \phi_j}{dt} (0) = \left(\frac{\partial}{\partial t} \Phi(\cdot, p_j)\right)(0) = v(\Phi(0,p_j))= v(p_j) \neq 0,\] 
    the map $\phi_j$ can not be constant. Thus $\phi_j$ is an open map and since $W_j \setminus K$ is dense in $W_j$, the set $\phi_j^{-1}(W_j \setminus K)$ is open and dense in $B(0,\tau)$.
    Since a finite intersection of dense open sets is non-empty, there exists some $t_0 \in \bigcap_{j=1}^n \phi_j^{-1}(W_j \setminus K)$.
    Then the time map $\gamma = \Phi(-t_0, \cdot)$ is the desired map, 
    since $\gamma \colon V \to \gamma(V)$ is a biholomorphic map and for each $j=1, \ldots, n$ we have 
    \[\gamma^{-1}(p_j) = (\Phi(-t_0, \cdot))^{-1}(p_j) = \Phi(t_0, p_j) = \phi_j(t_0) \notin K. \qedhere\]
\end{proof}

We are now ready to prove the main theorem.

\begin{proof}[Proof of main theorem:]
    Since $E$ is semi-admissible, it is of the form $E = H \cup S$ where $H$ is the union of the family of compact sets $\set{H_\lambda}_{\lambda \in \Lambda}$. 
    Let $\set{K_n}_{n \in \N_0}$ be the compact exhaustion of $X$ by $\mathcal{O}(X)$-convex sets given by Lemma \ref{izcrpanje} for the family $\set{H_\lambda}_{\lambda \in \Lambda}$.
    For $n \in \N_0$ we define the closed sets $E_n = E \cup K_n$. 
    Note that the sets $E_n$ are semi-admissible and by property (i) of the exhaustion $\set{K_n}_{n \in \N}$ they have no holes.
    In particular the set $E_{n-1} \cap K_{n+1}$ is Runge for every $n \in \N$.
    Note that we also have $E_n \setminus K_{n+1} = E \setminus K_{n+1}$ for $n \in \N_0$.

    Since $\varepsilon$ is a positive-valued continuous function, it achieves a positive minimum on compact sets. 
    For $n \in \N_0$ we can thus define $\varepsilon_n = \min \set{\varepsilon(p) \mid p \in E \cap K_{n+2}}$. 
    Note that $0 < \varepsilon_n \le \varepsilon_{n-1}$ for $n \in \N$.
    Furthermore let $\set{\delta_n}_{n \in \N_0}$ be given by Lemma \ref{appox_lemma} for the compact exhaustion $\set{K_n}_{n \in \N_0}$.
    For $n \in \N_0$ we then define 
    \[\tilde{\varepsilon}_n = \min\set{2^{-n-1}\varepsilon_n, \delta_n}.\]

    We will inductively construct a sequence of functions $\set{f_n}_{n \in \N_0}$ such that the following conditions hold:
    \begin{samepage}
        \begin{enumerate}[(1)]
            \item $f_n \in \widetilde{\mathcal{A}}(E_n)$ and $f_n$ is non-critical,
            \item $\norm{f_n - f_{n-1}}_{E_{n-1} \cap K_{n+1}} < \tilde{\varepsilon}_{n-1}$,
            \item $f_n = f_{n-1}$ on $E \setminus K_{n+1}$.
        \end{enumerate}
    \end{samepage}

    For $n=0$ take $f_0 = f$. 
    Condition (1) holds since $f \in \widetilde{\mathcal{A}}(E)$ is non-critical by assumption.
    Conditions (2) and (3) in this case are trivially satisfied.
    
    Now suppose we have already constructed $f_{n-1} \in \widetilde{\mathcal{A}}(E_{n-1})$ and let $L = E_{n-1} \cap K_{n+1}$.
    Note that the set $L$ has no holes and that $f_{n-1}$ is holomorphic and non-critical on some open neighborhood of $\overline{\mathring{L}}$.
    Choose compact sets $H_1, H_2 \subseteq X$ such that $\overline{\mathring{L}} \subseteq \mathring{H}_1 \subseteq H_1 \subseteq \mathring{H}_2$, 
    the set $L \cup H_2$ is Runge and that $f_{n-1}$ is holomorphic and non-critical on a neighborhood of $H_2$. 
    Let $\delta_1$ be given by Lemma \ref{MergelyanSetup} applied to $H_1 \subseteq H_2$ and $f_{n-1}$.
    We use Mergelyan's approximation theorem on the Runge set $L \cup H_2$ and the function $f_{n-1}$ to obtain a global holomorphic function $h \in \mathcal{O}(X)$ such that $\norm{h-f_{n-1}}_{L \cup H_2} < \min \set{\delta_1, \frac{\tilde{\varepsilon}_{n-1}}{3}}$.
    In particular we have 
    \[\norm{h-f_{n-1}}_{L} < \frac{\tilde{\varepsilon}_{n-1}}{3}\]
    and by our choice of $\delta_1$ we know $h$ is non-critical on an open neighborhood of $\overline{\mathring{L}}$.  
    
    Next we choose some $\delta_2 > 0$ such that the critical points of $h$ either lie on $L$ or outside $L(\delta_2)$.
    Let $p_1, \ldots, p_m$ be the critical points of $h$ that lie on $L$, there are only finitely many.
    Since $h$ is non-critical on an open neighborhood of $\overline{\mathring{L}}$, there exists for each $p_j$ some open neighborhood $W_j$, such that $W_j \cap L \subseteq \partial L$.
    We use Lemma \ref{bigMove} on the compact set $L$, the open neighborhood $L(\delta_2)$ and the points $p_1, \ldots, p_m$
    to obtain an open neighborhood $V$ of $L$ and a biholomorphic map $\gamma \colon V \to \gamma(V)$ such that $\norm{\gamma - \id}_V < \delta_2$ and $p_j \notin \gamma(E_{n-1} \cap K_{n+1})$ for all $j = 1,\ldots, m$.
    We define the holomorphic function $\tilde{h} \colon V \to \C$ by $\tilde{h} = h \circ \gamma$.
    By choosing $\delta_2$ sufficiently small, the uniform continuity of $h$ implies
    \[\norm{\tilde{h}-h}_{L} < \frac{\tilde{\varepsilon}_{n-1}}{3}.\]
    It can be verified that the function $\tilde{h}$ has no critical points on $L$, and by restricting $V$ if necessary, 
    we can assume the function $\tilde{h}$ is non-critical on $V$ as well. 

    We now use Theorem \ref{Franci_X} on the Runge set $L$ and the function $\tilde{h}$ to obtain a global non-critical holomorphic function $g \in \mathcal{O}(X)$ such that 
    \[\norm{g-\tilde{h}}_{L} < \frac{\tilde{\varepsilon}_{n-1}}{3}.\]
    By applying the triangle inequality we then obtain.
    \[\norm{g - f_{n-1}}_{L} \le \norm{g - \tilde{h}}_{L} + \norm{\tilde{h} - h}_{L} + \norm{h-f_{n-1}}_{L} < \tilde{\varepsilon}_{n-1}.\] 
   
    Let $U_1$ be an open neighborhood of $K_n$ obtained by applying Lemma \ref{obstajaV} to the compact set $K_n$, the open neighborhood $\mathring{K}_{n+1}$ and the family $\set{H_\lambda}_{\lambda \in \Lambda}$.
    Let $U_2$ be an open neighborhood of $K_n$ which is compactly contained in $U_1$.
    By the choice of $U_1$ and property (ii) of the exhaustion $\set{K_n}_{n \in \N_0}$
    any compact set $H_\lambda$ is contained either in $U_2$ or in $X \setminus \overline{U_1}$. 
    Let $\chi \colon X \to [0,1]$ be a smooth bump function, such that $\supp \chi \subseteq U_1$ and $\chi = 1$ on $\overline{U_2}$.
    We now define the function $f_n$ as
    \[f_n = \chi g + (1-\chi) f_{n-1}.\]
    The function $f_n$ is clearly continuous on $E_n$ and on some open neighborhood of $K_n$ or $H_\lambda$ it will agree with either $f_{n-1}$ inside $X \setminus \overline{U_1}$ or $g$ inside $U_2$.
    This shows that $f_n$ satisfies property (1).
    To verify property (2) we estimate 
    \[\norm{f_n-f_{n-1}}_{L} = \norm{\chi(g-f_{n-1})}_{L} \le \norm{g-f_{n-1}}_{L} < \tilde{\varepsilon}_{n-1},\]
    and to verify property (3), simply notice that $\chi = 0$ on $E_n \setminus K_{n+1}$. 

    We define the function $F$ as the pointwise limit of the sequence $\set{f_n}_{n \in \N_0}$.
    Since in particular $f_n \in \mathcal{O}(K_n)$ are non-critical and 
    \[\norm{f_n - f_{n-1}}_{K_{n-1}} \le \norm{f_n - f_{n-1}}_{L} < \tilde{\varepsilon}_{n-1} < \delta_{n-1},\]
    Lemma \ref{appox_lemma} guarantees that $F$ is a global non-critical holomorphic function. 
    It remains to verify the approximation property. To this end pick $p \in E$ and note that either $p \in K_1$ or $p \in K_{n+1}\setminus K_n$ for some $n \in \N$.
    If $p \in K_1$, we have 
    \[\abs{F(p)-f(p)} \le \sum_{k=0}^\infty \abs{f_{k+1}(p)- f_k(p)} < \sum_{k=0}^\infty \tilde{\varepsilon}_k \le \sum_{k=0}^\infty 2^{-k-2}\varepsilon_k \le \varepsilon_0 \sum_{k=0}^\infty 2^{-k-1} = \varepsilon_0 \le \varepsilon(p).\]
    If $p \in K_{n+1} \setminus K_n$, we have by property (3) that $f_{n-1}(p) = \ldots = f_0(p)$. We can then estimate 
    \begin{align*}
    \abs{F(p)-f(p)} &\le \sum_{k=0}^\infty \abs{f_{k+1}(p)- f_k(p)} = \sum_{k=n-1}^\infty \abs{f_{k+1}(p)- f_k(p)} \\
    &< \sum_{k=n-1}^\infty \tilde{\varepsilon}_k \le \sum_{k=n-1}^\infty 2^{-k-2}\varepsilon_k \le \varepsilon_{n-1} \sum_{k=n-1}^\infty 2^{-k-1} \le \varepsilon_{n-1} \le \varepsilon(p).
    \end{align*}
    Thus $\abs{F(p)-f(p)} < \varepsilon(p)$ for each $p \in E$, which concludes the proof.
\end{proof}

\section{Non-critical approximation on general sets of Carleman approximation} \label{Uniform}

Lastly, we discuss the problem of non-critical approximation on general sets of Carleman approximation and present a different approach 
that yields uniform approximation on some sets of Carleman approximation that are not semi-admissible.

The main reason why we considered semi-admissible sets for non-critical Carleman approximation is that
they have some room between components with non-empty interior. 
Thus, we can interpolate using smooth bump functions and not need to worry about holomorphicity, 
since where we interpolate a function of the class $\widetilde{\mathcal{A}}$ need only be continuous. 

This is not the case for general sets of Carleman approximation. Consider again the set 
\[E_2 = \bigcup_{n \in \Z} \overline{B\left(n, \frac{1}{2}\right)} \subseteq \C,\]
see figure \ref{Primer2}.
Trying to replicate the proof of the main theorem in this setting, 
we would end up interpolating by a smooth bump function on a part of the set $E_2$ with non-empty interior where a function of the class $\widetilde{\mathcal{A}}(E_2)$ must be holomorphic.
This requires us to modify our function by a solution of a $\bar{\partial}$-equation, but by doing so, we lose control over the critical points of the function.
One can consider different solutions to $\bar{\partial}$-equations, for example, ones with Hörmander estimates, see \cite[Section 4.2]{Hormander}, but in this case one loses the approximation properties of the function along the set $E_2$. 
This is the problem one needs to overcome to achieve non-critical approximation on more general sets.

Using a different approach, we can achieve non-critical uniform approximation on certain sets of Carleman approximation in simply connected open Riemann surfaces. 
The idea is to approximate the derivative of our initial function, and then integrate the approximating function to obtain our global non-critical approximation.
The same approach can be used to give an alternate proof of Theorem \ref{Franci_X} in the case when $X = \C$, see \cite[Theorem 3.1]{franc}.

The next Proposition shows how this idea can be applied to prove that non-critical uniform approximation is possible on the set $E_2$

\begin{proposition}
    Let $E_2 \subseteq \C$ be the closed set defined above, $f \in \mathcal{O}(E_2)$ a non-critical holomorphic function and $\varepsilon > 0$.
    Then there exists an entire non-critical function $F \in \mathcal{O}(\C)$ such that $\abs{F(z)-f(z)} < \varepsilon$ holds for every $z \in E_2$.
\end{proposition}

\begin{proof}
For every $z \in E_2$, there exists some $n \in \Z$ such that $z \in \overline{B\left(n, \frac{1}{2}\right)}$.
Let $\gamma_z^1$ be the line segment between $0$ and $n$ and $\gamma_z^2$ the line segment between $n$ and $z$.
Then $\gamma_z = \gamma_z^1 \cup \gamma_z^2$ is a piecewise linear path from the origin to $z$ contained inside $E_2$.

Suppose the function $f$ is holomorphic and non-critical on some neighborhood $U$ of $E_2$.
By restricting $U$ if necessary, we may assume it is simply connected. 
Then the function $f'$ is a nowhere vanishing holomorphic function on a simply connected domain $U$, so it is of the form $f' = e^g$ for some $g \in \mathcal{O}(U)$.
Let $\delta > 0$ and $C > 0$ be such that for $\abs{z} < \delta$ we have $\abs{e^z-1} \le C \abs{z}$. 
Since $E_2$ is a set of Carleman approximation, we can take the continuous complex-valued function $g \colon E_2 \to \C$ and the continuous positive-valued function 
\[\tilde{\varepsilon}(z) = \min\set{\delta, \frac{\varepsilon e^{-\re(z)^2}}{\sqrt{\pi}C\abs{e^{g(z)}}}},\]
and find an entire function $G \in \mathcal{O}(\C)$ such that $\abs{G(z)-g(z)} < \tilde{\varepsilon}(z)$ for all $z \in E_2$.

We define the function $F \colon \C \to \C$ by
\[F(z) = f(0) + \int_0^z e^{G(\zeta)} \, d\zeta.\]
Note that $F$ is a well-defined entire holomorphic function and its value is independent of the choice of path from $0$ to $z$ along which we integrate. Thus, without loss of generality, we can pick the path to be $\gamma_z$.
For $z \in \overline{B\left(n, \frac{1}{2}\right)}$, we now calculate
\begin{align*}
    \abs{F(z) - f(z)} &= \abs{F(z) - f(0) + f(0) - f(z)} = \abs{\int_{\gamma_z} e^{G(\zeta)} \, d\zeta - \int_{\gamma_z} f'(\zeta) \, d\zeta} \\
    &= \abs{\int_{\gamma_z} e^{G(\zeta)} \, d\zeta - \int_{\gamma_z} e^{g(\zeta)} \, d\zeta} \le \int_{\gamma_z} \abs{e^{G(\zeta)} - e^{g(\zeta)}} \, \abs{d\zeta} \\
    &= \int_{\gamma_z} \abs{e^{g(\zeta)}}\abs{e^{G(\zeta)-g(\zeta)} - 1} \, \abs{d\zeta} \le C \int_{\gamma_z} \abs{e^{g(\zeta)}}\abs{G(\zeta)-g(\zeta)} \, \abs{d\zeta} \\
    &\le C\int_{\gamma_z} \abs{e^{g(\zeta)}}\tilde{\varepsilon}(\zeta) \, \abs{d\zeta} \le \frac{\varepsilon}{\sqrt{\pi}} \int_{\gamma_z} e^{-\re(\zeta)^2} \, \abs{d\zeta} \\
    &= \frac{\varepsilon}{\sqrt{\pi}} \left(\int_{\gamma_z^1} e^{-\re(\zeta)^2} \, \abs{d\zeta} + \int_{\gamma_z^2} e^{-\re(\zeta)^2} \, \abs{d\zeta}\right).
\end{align*}
A quick computations shows
\[\int_{\gamma_z^1} e^{-\re(\zeta)^2} \, \abs{d\zeta} = \int_0^n e^{-t^2} \, dt \le \frac{\sqrt{\pi}}{2}\]
and 
\[\int_{\gamma_z^2} e^{-\re(\zeta)^2} \, \abs{d\zeta} = \int_0^1 e^{-(n + t(\re(z)-n))^2} \, \abs{z-n}dt \le \int_0^1 1 \cdot \frac{1}{2} \, dt = \frac{1}{2} \le \frac{\sqrt{\pi}}{2},\]
which give us the desired estimate 
\[\abs{F(z) - f(z)} \le \frac{\varepsilon}{\sqrt{\pi}} \left(\frac{\sqrt{\pi}}{2} + \frac{\sqrt{\pi}}{2}\right) \le \varepsilon.\]
Finally notice that $F' = e^{G} \neq 0$, so $F$ is indeed non-critical. 
\end{proof}

The key ingredient of the proof was that we were able to construct a positive-valued function $\tilde{\varepsilon}$, which could correct for the distance traveled along the paths on which we were integrating. 
Thus, this approach extends to any simply connected open Riemann surface and set of Carleman approximation provided one knows how to construct a suitable function $\tilde{\varepsilon}$. 
We also remark that, in general, using this approach, we cannot do better than uniform approximation.

We do not know if uniform approximation by non-critical functions is possible for more general sets, for example sets on which Arakelyan approximation is possible.
The problem is open even for simple sets, a uniform horizontal strip for example.

\nocite{*} 
\printbibliography

\end{document}